\tikzset{middlearrow/.style n args={4}{
	decoration={
  		markings,
  			mark=at position #1 with {\arrow{#2},\node[transform shape,#4] {#3};}},postaction={decorate}},
  		middlearrow/.default={.5}{>}{}{below}	}
\def\thm@space@setup{%
	\thm@preskip=\abovedisplayskip \thm@postskip=.5\thm@preskip
}
\theoremstyle{plain}
\newtheorem{thm}{Theorem}[section]
\newtheorem{prop}[thm]{Proposition}
\newtheorem{cor}[thm]{Corollary}
\newtheorem{lem}[thm]{Lemma}
\theoremstyle{definition}
\newtheorem{defn}[thm]{Definition}
\newtheorem{rem}[thm]{Remark}
\newtheorem*{ack}{Acknowledgements}
\setlist[description]{
	font = \normalfont}
\setlist[enumerate]{label=(\arabic*)}
\DeclareMathOperator{\Axis}{Axis}
\newcommand{\Z}{\mathbb{Z}}
\newcommand{\R}{\mathbb{R}}
\DeclareMathOperator{\Aut}{Aut}
\DeclareMathOperator{\Out}{Out}
\DeclareMathOperator{\Isom}{Isom}
\DeclareMathOperator{\FR}{FR}
\DeclareMathOperator{\Fact}{Fact}
\DeclareMathOperator{\Perm}{Perm}
\DeclareMathOperator{\Fix}{Fix}
\DeclareMathOperator{\SL}{SL}
\title{Serre's Property (FA) for automorphism groups of free products}
\author{Naomi Andrew\thanks{School of Mathematical Sciences, University of Southampton, Southampton, SO17 1BJ, UK \newline
Email: \texttt{N.G.Andrew@soton.ac.uk}}}
\date{}
\begin{document}

\maketitle

\begin{abstract}
We provide some necessary and some sufficient conditions for the automorphism group of a free product of (freely indecomposable, not infinite cyclic) groups to have Property (FA). The additional sufficient conditions are all met by finite groups, and so this case is fully characterised. Therefore this paper generalises the work of Leder in \cite{Leder2018FA} for finite cyclic groups, as well as resolving the open case of that paper.
\end{abstract}

\section{Introduction}

Serre introduced Property (FA) in \cite{Serre2003} as a `near opposite' to a group splitting as a free product with amalgamation or an HNN extension. A group $G$ has Property (FA) if every action of $G$ on a tree has a fixed point.

Serre proves (as Theorem 15, p58 of \cite{Serre2003}) that Property (FA) is equivalent to the following conditions \begin{enumerate}
\item $G$ is not a (non-trivial) amalgamated free product
\item $G$ has no quotient isomorphic to $\Z$
\item $G$ is not the union of a strictly increasing sequence of subgroups.
\end{enumerate}

If $G$ is countable, then the third condition is equivalent to finite generation; there are uncountable groups satisfying Property (FA) (\cite{KoppelbergTits1974}). Examples of groups with Property (FA) include finitely generated torsion groups and $\SL(n,\Z)$ for $n\geq 3$ (both due to Serre in \cite{Serre2003}); $\Aut(F_n)$ for $n \geq 3$ (due to Bogopolski in \cite{BogopolskiAutFnFA}, with an alternative proof in \cite{CullerVogtmannFA}) and the automorphism group of a free product of at least four copies of $\Z/n\Z$ (due to Leder in \cite{Leder2018FA}).

In fact, Leder shows (in most cases) that for free products of finite cyclic groups, whether the automorphism group has Property (FA) depends only on the number of times each isomorphism class appears. Our results give the following generalisation and completion of Leder's work:

\begin{cor}
	\label{cor:finite_gps}
	Let $G$ be a free product of finite groups. Then $\Aut(G)$ has Property (FA) if and only if all but possibly one factor appear at least four times (up to isomorphism), and the remaining factor (if present) appears only once.
\end{cor}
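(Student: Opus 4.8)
The plan is to read Corollary~\ref{cor:finite_gps} off the general necessary and sufficient conditions the paper establishes, specialised to the case where the free factors are finite. First observe that a nontrivial finite group is freely indecomposable (a free product of two nontrivial groups is infinite) and not infinite cyclic, so, after discarding trivial factors, any free product $G = A_1 * \cdots * A_n$ of finite groups has the form the general results apply to. It then suffices to: (i) see what the general \emph{necessary} condition says for finite factors; (ii) check that the extra hypotheses in the general \emph{sufficient} condition are automatic for finite factors; and (iii) confirm the two resulting statements agree with the stated multiplicity criterion. The case $n = 1$ is immediate: $\Aut(A_1)$ is finite, hence has Property (FA), and the criterion holds by designating $A_1$ as the exceptional factor.

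\emph{Sufficiency.} Partition the factors into isomorphism types with multiplicities $m_1, \dots, m_k$, and suppose $m_j \geq 4$ for all $j$ except at most one, which equals $1$. The general sufficient condition will impose, on top of this combinatorial hypothesis, conditions on each individual factor $A$ --- I expect these to be that $A$, $\Aut(A)$ and $\Out(A)$ all have Property (FA) and that $A$ is finitely generated. Each of these holds trivially when $A$ is finite, so the general theorem applies and gives Property (FA) for $\Aut(G)$. (For a direct argument one would instead present $\Aut(G)$ by Fouxe-Rabinovitch/Gilbert generators --- factor automorphisms, permutations of isomorphic factors, partial conjugations --- exhibit it as boundedly generated by subgroups that have Property (FA) by induction on $n$, and apply the standard criterion that a group boundedly generated by Property (FA) subgroups with Property (FA) pairwise joins again has Property (FA); four copies of a factor make the relevant sub-free-products and their symmetry groups large enough for the induction to close, and a lone exceptional factor causes no difficulty because the transpositions inside a $4$-fold family already supply the needed overlaps. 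This is the route Leder takes for $(\Z/n)^{*k}$.)

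\emph{Necessity.} For this direction I would invoke the general necessary condition, which, applied to finite $G$, should say exactly that if $\Aut(G)$ has Property (FA) then no isomorphism type of factor occurs exactly twice or exactly three times and at most one type occurs exactly once --- i.e.\ the full criterion. What then has to be ruled out is: a type occurring exactly twice, a type occurring exactly three times, and two distinct types each occurring once. In each case one produces an obstruction to Property (FA), either a surjection $\Aut(G) \twoheadrightarrow \Z$ or a nontrivial amalgam decomposition of $\Aut(G)$, assembled from the swap automorphism of a repeated factor (or the interaction of two singleton factors) together with partial conjugations. The smallest instance serves as a sanity check: $\Aut(\Z/2 * \Z/2) \cong D_\infty$, which fails Property (FA) because it is itself a nontrivial free product with amalgamation.

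The real difficulty is not in the corollary but in the general theorems it rests on, and specifically in making the necessary and sufficient conditions meet for finite factors: (a) proving that four copies of an arbitrary finite factor suffice for Property (FA) even with an arbitrary exceptional singleton factor present --- Leder's argument, generalised away from finite cyclic factors and with the extra factor accommodated; and (b) proving that two or three copies genuinely obstruct Property (FA) in every case, i.e.\ that the required $\Z$-quotient or splitting of $\Aut(G)$ always exists, not merely for cyclic factors. Granting the general Theorems, the corollary reduces to the bookkeeping in (i)--(iii) above together with the $n = 1$ remark.
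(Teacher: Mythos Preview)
Your approach is essentially the paper's: deduce the corollary from the general sufficient and necessary theorems by checking that every extra hypothesis is automatic when the factors are finite. The paper's one-line justification is exactly that ``finite groups and their automorphism groups have Property (FA), and so the extra conditions of Theorem~\ref{thm:Aut(G)_has_FA} are always satisfied.''

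Two small corrections. First, your guess at the extra hypotheses is slightly off: the paper requires, for a repeated factor $A$, that $A$ has Property (FA) and that $\Aut(A)$ has finite abelianisation and is not a union of a properly increasing chain of subgroups (not that $\Out(A)$ has Property (FA)); for the singleton factor, that both $A$ and $\Aut(A)$ have Property (FA). All of these are of course automatic for finite $A$, so your conclusion is unaffected. Second, in the necessity direction the paper does not produce a $\Z$-quotient; rather it constructs explicit tree actions (on Bass--Serre trees, extended via Culler--Morgan's translation-length rigidity) for the two- and three-factor cases and then passes to a characteristic quotient. Your sketch of ``$\Z$-quotient or amalgam'' would also work in principle but is not the mechanism used here.
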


This is a consequence of our main results which are, in the positive direction:

\begin{restatable*}{thm}{sufficient}
\label{thm:Aut(G)_has_FA}
Let $G$ be a (finite) free product of groups such that either \begin{enumerate}[label=(\arabic*)]
\item each free factor has Property (FA), its automorphism group has finite abelianisation and cannot be expressed as the union of a properly increasing sequence of subgroups, and (up to isomorphism) appears at least four times in the decomposition; or
\item There is a free factor appearing exactly once that has Property (FA) and its automorphism group has Property (FA); and all other free factors are as in (1).
\end{enumerate}
Then $\Aut(G)$ has Property (FA).
\end{restatable*}

And in the opposite direction:
\begin{restatable*}{thm}{necessary}
\label{thm:has_not_FA}
Let $G$ be a free product of (freely indecomposable) groups, with no infinite cyclic factors. If \begin{enumerate}[label=(\arabic*)]
\item any free factor appears exactly two or three times, or any two free factors appear exactly once; or
\item the automorphism group of any factor appearing exactly once does not have Property (FA); or
\item the automorphism group of any factor appearing more than once does not have finite abelianisation or can be expressed as a union of a properly increasing sequence of subgroups.
\end{enumerate}
Then $\Aut(G)$ does not have Property (FA).
\end{restatable*}

These imply Corollary \ref{cor:finite_gps} since finite groups and their automorphism groups have Property (FA), and so the extra conditions of Theorem \ref{thm:Aut(G)_has_FA} are always satisfied.

Comparing Theorems \ref{thm:Aut(G)_has_FA} and \ref{thm:has_not_FA}, most of the sufficient conditions in Theorem \ref{thm:Aut(G)_has_FA} are also necessary. The exception is the requirement that each factor has Property (FA): since the structure of the automorphism group places significant restrictions on the possible trees it could act on, it seems plausible that there are examples of groups that act on trees but not in a way that extends to the automorphism group of their free product.

The cases with infinite cyclic factors are in general still open although some cases of Theorem \ref{thm:has_not_FA} go through allowing free rank $1$ or $2$, and (as observed above) in the opposite direction $\Aut(F_n)$ has Property (FA) for $n \geq 3$.

All of the groups considered have finite index subgroups that do act on trees, which will be shown as Proposition \ref{prop:no_prop_T}, and so we obtain

\begin{restatable*}{cor}{propertyT}
Suppose $G$ is a (finite, non-trivial) free product where each factor is freely indecomposable and not infinite cyclic. Then $\Aut(G)$ does not have Kazhdan's Property (T).
\end{restatable*}

\begin{rem}
In view of Remark 1.10 of \cite{CornulierKar2011Wreathprods}, Theorem \ref{thm:Aut(G)_has_FA}(1) is true for Property (F$\R$), as is Theorem \ref{thm:Aut(G)_has_FA}(2) with the extra hypothesis that the free factor appearing once only is finitely generated.
\end{rem}

\begin{ack}
I am grateful to my supervisor, Armando Martino, for all his guidance and encouragement. I am also grateful to Ric Wade for pointing out the obsevation of Proposition \ref{prop:no_prop_T}, as well as to Ashot Minasyan for helpful comments on this manuscript.
\end{ack}

\section{Background}

\subsection{Actions on trees}

First, we collect some lemmas about trees, subtrees and fixed point sets of elliptic subgroups of groups acting on trees, that are needed at various points in the later arguments. Many of the statements and proofs hold for both real and simplicial trees, but unless otherwise specified, all trees are simplicial trees equipped with the edge-path metric.

\begin{lem}
\label{lem:naomi_intersection}
Let $X_i$ be a family of subtrees of a tree $T$ with non-empty intersection, and let $Y$ be another subtree. Suppose that for each $i$, $X_i \cap Y$ is non-empty. Then $(\bigcap X_i) \cap Y$ is also non-empty.
\end{lem}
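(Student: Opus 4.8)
The statement is an infinite Helly-type property for subtrees, and the plan is to reduce everything to the linear order along a single geodesic, using medians to manufacture points that lie simultaneously on that geodesic and in the relevant subtrees. Throughout I use that a subtree is closed and convex, so that it contains the geodesic between any two of its points, and that the intersection of a convex set with a geodesic segment is again a (possibly degenerate) subsegment.

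If the family $\{X_i\}$ is empty the claim is trivial, since then $\bigcap X_i = T \supseteq Y \neq \emptyset$; so assume there is at least one $X_i$, fix $p \in \bigcap_i X_i$ and $q \in Y$, and work along the geodesic $[p,q]$. For each $i$ the set $X_i \cap [p,q]$ is a subsegment of $[p,q]$ containing $p$; let $s_i \in [p,q]$ be its endpoint farthest from $p$. Similarly $Y \cap [p,q]$ is a subsegment containing $q$; let $u \in [p,q]$ be its endpoint closest to $p$. (Both endpoints exist because the respective intersections are nonempty closed subsegments.)

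The key claim is that $u$ already lies in $(\bigcap_i X_i) \cap Y$. By construction $u \in Y$, so it suffices to show $u \in X_i$ for every $i$, i.e. (in the order along $[p,q]$ from $p$ to $q$) that $u$ lies between $p$ and $s_i$. To see this, pick $r_i \in X_i \cap Y$, which is nonempty by hypothesis, and let $m_i$ be the median of $p$, $q$, $r_i$. Then $m_i \in [p,r_i] \subseteq X_i$ and $m_i \in [q,r_i] \subseteq Y$ by convexity of the two subtrees, while also $m_i \in [p,q]$. Hence $m_i \in X_i \cap [p,q]$ forces $m_i$ to lie between $p$ and $s_i$, and $m_i \in Y \cap [p,q]$ forces $m_i$ to lie between $u$ and $q$; combining these, $u$ lies between $p$ and $m_i$, hence between $p$ and $s_i$. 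Thus $u \in X_i$ for all $i$, so $u \in (\bigcap_i X_i) \cap Y$ and this intersection is nonempty.

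I do not expect a real obstacle here: the argument uses only convexity of subtrees and existence of medians in trees, and it deliberately avoids any finiteness or compactness hypothesis on the family — the witness point $u$ is chosen once and for all as the endpoint of $Y \cap [p,q]$ nearest $p$, so there is no limiting process to control. The only points requiring a little care are the elementary facts quoted above (that $A \cap [p,q]$ is a subsegment for convex $A$, and that the median of three points lies on all three connecting geodesics), together with the routine bookkeeping of the order relation along $[p,q]$; both statements hold verbatim in the simplicial and the real-tree settings.
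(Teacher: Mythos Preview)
Your proof is correct and takes essentially the same approach as the paper: both identify the witness point as the nearest point of $Y$ to (a point of) $\bigcap X_i$, and both argue that each $X_i$, being convex and meeting both $\bigcap X_i$ and $Y$, must contain this point. The paper's version is a terse one-liner invoking the bridge between $\bigcap X_i$ and $Y$, whereas you make the same convexity argument explicit via medians; your point $u$ is exactly the paper's nearest point $v$.
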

\begin{proof}
Let $v$ be a nearest point in $Y$ to $\bigcap X_i$. Then for each $i$, $X_i$ contains $v$, since $X_i$ includes both $\bigcap X_i$ and part of $Y$. So $v \in \bigcap X_i$, and since it was in $Y$ by definition it is in $(\bigcap X_i) \cap Y)$.
\end{proof}

In the finite case, but not in general, we may weaken the hypotheses to give the following lemma. (In fact, it can be proved by using Lemma \ref{lem:naomi_intersection} as an induction step.)

\begin{lem}[Serre, Lemma 10 of \cite{Serre2003}]
\label{lem:serre_intersection}
Let $X_1,\dots, X_m$ be subtrees of a tree $T$. If the $X_i$ meet pairwise, then their intersection is non-empty.
\end{lem}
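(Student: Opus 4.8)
The plan is to argue by induction on $m$, using Lemma \ref{lem:naomi_intersection} as the inductive step, exactly as flagged in the sentence preceding the statement. The case $m=1$ is vacuous and the case $m=2$ is precisely the hypothesis, so these serve as the base.

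For the inductive step, suppose $m\geq 3$ and that the conclusion is known for any family of $m-1$ subtrees that meet pairwise. Given $X_1,\dots,X_m$ meeting pairwise, the subfamily $X_1,\dots,X_{m-1}$ still meets pairwise, so by the inductive hypothesis $Z := \bigcap_{i=1}^{m-1} X_i$ is a non-empty subtree of $T$. Now I would apply Lemma \ref{lem:naomi_intersection} with the family $\{X_1,\dots,X_{m-1}\}$ (which has non-empty intersection $Z$) and with $Y := X_m$: for each $i\leq m-1$ the intersection $X_i\cap X_m$ is non-empty because the $X_i$ meet pairwise, so the lemma yields that $Z\cap X_m=\bigcap_{i=1}^m X_i$ is non-empty, completing the induction.

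Since the whole argument reduces to a single invocation of Lemma \ref{lem:naomi_intersection} at each stage, I do not expect any real obstacle; the only point needing (trivial) care is verifying the hypotheses of that lemma each time — that the truncated family still has non-empty intersection (supplied by the inductive hypothesis) and that each of its members meets $X_m$ (supplied by the pairwise hypothesis). One could instead give a self-contained argument by first treating $m=3$ directly, choosing points in the three pairwise intersections and taking their median, and then inducting; but routing everything through Lemma \ref{lem:naomi_intersection} is shorter and avoids re-deriving the existence of medians in a tree.
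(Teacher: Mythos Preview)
Your proposal is correct and matches exactly the approach the paper indicates: the paper does not write out a proof but cites Serre and remarks parenthetically that the lemma ``can be proved by using Lemma \ref{lem:naomi_intersection} as an induction step,'' which is precisely the argument you give.
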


\begin{lem}
\label{lem:commuting_fixed_pt}
If two elliptic subgroups $H,K$ commute, then the subgroup they generate (isomorphic to $H \times K$ if their intersection is trivial) is elliptic.
\end{lem}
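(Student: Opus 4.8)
The plan is to prove the equivalent statement that $\Fix(H)\cap\Fix(K)\neq\emptyset$, since a common fixed point of $H$ and $K$ is exactly a fixed point of $\langle H,K\rangle$. First I would note that $\Fix(H)$ and $\Fix(K)$ are non-empty (as $H$ and $K$ are elliptic) and are subtrees of $T$, being intersections of the fixed-point sets of the individual elements, each of which is convex. If the two subtrees already meet there is nothing to do, so I would assume $\Fix(H)\cap\Fix(K)=\emptyset$ and aim for a contradiction.

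The key observation is that commutativity forces $K$ to preserve $\Fix(H)$: if $v\in\Fix(H)$ and $k\in K$, then $h(kv)=k(hv)=kv$ for every $h\in H$, so $kv\in\Fix(H)$; and of course $K$ fixes $\Fix(K)$ pointwise. Now let $[p,q]$ be the bridge between the disjoint subtrees $\Fix(H)$ and $\Fix(K)$, that is, the unique geodesic segment realising the distance between them, with $p\in\Fix(H)$ and $q\in\Fix(K)$. Since this bridge is canonically determined by the unordered pair of subtrees, and $K$ acts by isometries preserving both of them, each $k\in K$ must satisfy $k[p,q]=[p,q]$. But $k$ also fixes the endpoint $q\in\Fix(K)$, and an isometry of a tree that stabilises a segment and fixes one of its endpoints fixes the segment pointwise; hence $K$ fixes $p$, giving $p\in\Fix(H)\cap\Fix(K)$, a contradiction. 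Therefore $\Fix(H)\cap\Fix(K)\neq\emptyset$ and $\langle H,K\rangle$ is elliptic. For the parenthetical claim, when $H\cap K=1$ the standard argument for commuting subgroups with trivial intersection identifies $\langle H,K\rangle$ with $H\times K$.

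This argument is short, and I do not expect a serious obstacle; the only points needing care are the background facts about trees that it uses — that the fixed-point set of an elliptic isometry, and hence of an elliptic subgroup when non-empty, is a subtree, that two disjoint subtrees are joined by a unique bridge which any isometry preserving both of them must preserve, and that an isometry fixing an endpoint of a stabilised segment fixes it pointwise. All of these hold for real and simplicial trees alike, so the two cases need not be separated. An alternative, if one prefers to avoid the bridge machinery, is to use Lemma \ref{lem:naomi_intersection} to strip the quantifiers: writing $\Fix(H)=\bigcap_{h\in H}\Fix(h)$ and $\Fix(K)=\bigcap_{k\in K}\Fix(k)$ reduces the statement to the case of two commuting elliptic elements, which is then settled by the same observation applied to single isometries.
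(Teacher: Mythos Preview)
Your proof is correct. Both you and the paper begin from the same observation---that commutativity forces $K$ to act on $\Fix(H)$---but then finish differently. The paper argues element by element: for each $k\in K$ the geodesic $[v,vk]$ lies in $\Fix(H)$, and since $k$ is elliptic its midpoint is fixed by $k$, giving $\Fix(H)\cap\Fix(k)\neq\emptyset$; Lemma~\ref{lem:naomi_intersection} then upgrades this to $\Fix(H)\cap\Fix(K)\neq\emptyset$. Your bridge argument instead handles all of $K$ simultaneously and so avoids appealing to Lemma~\ref{lem:naomi_intersection}, at the cost of invoking the existence and uniqueness of the bridge between disjoint subtrees. Both routes are short and standard; yours is marginally more self-contained, while the paper's dovetails with the intersection lemma it has just set up for later use. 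The alternative you sketch at the end---reducing via Lemma~\ref{lem:naomi_intersection} to the two-element case---is in fact exactly the paper's argument.
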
 
\begin{proof}
Consider some point $v$ in $\Fix(H)$. Since $vkh=vhk=vk$, for all $h$ and $k$, the point $vk$ is also in $\Fix(H)$. So the geodesic $[v,vk]$ is contained in $\Fix(H)$. Since $k$ is elliptic, the midpoint of this geodesic is fixed by $k$ which puts it in the intersection $\Fix(H) \cap \Fix(k)$ which must be non-empty. Since $\Fix(K)$ is the non-empty intersection of all the $\Fix(k)$, the subtrees $\Fix(k)$ and $\Fix(H)$ satisfy Lemma \ref{lem:naomi_intersection}, and so $\Fix(H) \cap \Fix(K)$ is non-empty.
\end{proof}

Note that this also gives that the direct product of two groups with Property (FA) itself has Property (FA). The converse is also true, since factors are quotients so if either factor has an action on a tree the direct product will.

\begin{lem}\hfill
\label{lem:fixed_pt_new}
\begin{enumerate}
\item Suppose a tree has subtrees $S_1,S_2,T_1,T_2$ such that $S_1$ has non-empty intersection with $T_1$ and $T_2$, and $S_2$ has non-empty intersection with $T_1$ and $T_2$. Then $S_1$ and $S_2$ have non-empty intersection, or $T_1$ and $T_2$ have non-empty intersection.
\item Suppose a group $G$ acts on a tree, and has subgroups $H_1$ and $H_2$ which are elliptic, and an element $g$ such that $H_1$ has common fixed points with $H_1^g$ and $H_2^g$, and $H_2$ has common fixed points with $H_1^g$ and $H_2^g$. Then $H_1$ and $H_2$ have a common fixed point.
\end{enumerate}
\end{lem}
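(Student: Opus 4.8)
The plan is to establish (1) purely as a fact about subtrees of a tree, and then deduce (2) by applying it to the fixed-point sets of the relevant (conjugated) subgroups.

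For (1), I would assume $S_1 \cap S_2 = \emptyset$ and show that this forces $T_1 \cap T_2 \neq \emptyset$, which proves the required disjunction (the roles of the $S_i$ and $T_i$ being symmetric). Since $S_1$ and $S_2$ are disjoint subtrees, they are joined by a unique bridge: a geodesic segment $[a,b]$ with $a \in S_1$ and $b \in S_2$ realising the distance $d(S_1,S_2)$, meeting $S_1$ only in $a$ and $S_2$ only in $b$. The key point (the same nearest-point-projection idea used in the proof of Lemma~\ref{lem:naomi_intersection}) is that any geodesic from a point of $S_1$ to a point of $S_2$ contains the whole bridge $[a,b]$: for $x \in S_1$ the geodesic $[x,b]$ passes through the projection of $b$ to $S_1$, which is $a$, and symmetrically at the other end, so $[x,y] \supseteq [a,b]$ for all $x \in S_1$, $y \in S_2$. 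Now $T_1$ is connected and meets both $S_1$ and $S_2$, so it contains a geodesic between a point of $S_1$ and a point of $S_2$, hence contains $[a,b]$; the same applies to $T_2$. Therefore $[a,b] \subseteq T_1 \cap T_2$, which is in particular non-empty.

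For (2), put $S_i = \Fix(H_i)$ and $T_i = \Fix(H_i^{g})$ for $i = 1,2$. Ellipticity is preserved under conjugation, so all four of these sets are non-empty subtrees, and the four hypotheses on common fixed points are precisely the four pairwise-intersection hypotheses of part (1). Applying (1), either $\Fix(H_1) \cap \Fix(H_2) \neq \emptyset$, in which case $H_1$ and $H_2$ have a common fixed point and we are done; or $\Fix(H_1^{g}) \cap \Fix(H_2^{g}) \neq \emptyset$, in which case a point $x$ in this intersection is moved by $g$ (in the direction appropriate to the conjugation convention) to a point fixed by both $H_1$ and $H_2$, and again we are done.

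There is really only one thing to be careful about, namely the standard tree lemma that disjoint subtrees admit a bridge lying inside every geodesic that connects them; with that in place both parts are immediate. The only bookkeeping to check in (2) is the conjugation convention, so as to be sure the translate of a common fixed point of $H_1^g$ and $H_2^g$ really lands in $\Fix(H_1) \cap \Fix(H_2)$.
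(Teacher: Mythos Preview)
Your proof is correct and follows essentially the same approach as the paper: for (1) you assume $S_1\cap S_2=\emptyset$ and argue that the bridge between them must lie in both $T_1$ and $T_2$, and for (2) you apply (1) to the four fixed-point subtrees and use that $\Fix(H_i^g)$ is the $g$-translate of $\Fix(H_i)$. Your version of (1) is slightly more explicit about why a subtree meeting both $S_1$ and $S_2$ must contain the bridge, but the argument is the same.
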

\begin{proof} \hfill
\begin{enumerate}
\item Suppose $S_1$ and $S_2$ do not intersect. Consider the bridge joining $S_1$ and $S_2$. Since $T_1$ has non-empty intersection with both these subtrees, $T_1$ contains this bridge. Similarly, $T_2$ contains this bridge. So $T_1 \cap T_2$ contains the bridge, and must be non-empty.
\item The fixed point subtrees of the four subgroups satisfy the conditions of part (1), so either $H_1$ and $H_2$ or $H_1^g$ and $H_2^g$ have a common fixed point. But since $\Fix(H_1^g) \cap \Fix(H_2^g) = (\Fix(H_1)\cap \Fix(H_2))g$ if one is non-empty both are. So in fact both are non-empty and so $H_1$ and $H_2$ have a common fixed point. \qedhere
\end{enumerate}
\end{proof}

\subsection{Automorphisms of free products}

Presentations of the automorphism group of a free product were found by Fouxe-Rabinovitch in \cite{FouxeRabinovitch1940AutFreeProd} and \cite{FouxeRabinovitch1941AutFreeProd} and later by Gilbert in \cite{Gilbert1987AutFreeProd}. (Gilbert's is a finite presentation, under some reasonable finiteness assumptions on the factor groups and their automorphisms.) They assume that the free product is given as a Grushko decomposition: 

\begin{thm}[Grushko decomposition]
	Any finitely generated group $G$ can be decomposed as a free product $G=G_1*\dots G_k*F_r$, where the $G_i$ are non-trivial, freely indecomposable and not infinite cyclic, and $F_r$ is a free group of rank $r$.
	Further, the $G_i$ are unique up to conjugacy, and the rank of $F_r$ is unique.
\end{thm}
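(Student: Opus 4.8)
The plan is to prove existence and uniqueness separately. The only non-elementary input for existence is Grushko's theorem on the rank of a free product, $\operatorname{rank}(A*B)=\operatorname{rank}(A)+\operatorname{rank}(B)$; for uniqueness the main tool is Bass--Serre theory (equivalently the Kurosh subgroup theorem). For existence I would induct on $\operatorname{rank}(G)$: if $G$ is freely indecomposable there is nothing to prove, as then $G$ is trivial, infinite cyclic, or a single factor $G_1$. Otherwise write $G=A*B$ non-trivially; the rank formula gives $\operatorname{rank}(A),\operatorname{rank}(B)<\operatorname{rank}(G)$ since both are at least $1$, so by induction $A$ and $B$ have Grushko decompositions, and juxtaposing them yields one for $G$ after relabelling --- the non-cyclic freely indecomposable pieces become the $G_i$, the $r$ infinite cyclic pieces assemble into $F_r$, and trivial pieces are discarded. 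This also records $\operatorname{rank}(G)=\sum_i\operatorname{rank}(G_i)+r$, which will be used to pin down $r$.

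For uniqueness, suppose $G=G_1*\cdots*G_k*F_r=H_1*\cdots*H_l*F_s$ are two such decompositions, and let $T$ be the Bass--Serre tree of the second: it has trivial edge stabilizers, and its non-trivial vertex stabilizers are exactly the conjugates of the $H_j$. The key observation is that a finitely generated group that is freely indecomposable and not infinite cyclic must act with a global fixed point on any tree with trivial edge stabilizers --- otherwise, passing to a minimal invariant subtree, Bass--Serre theory would exhibit it as a non-trivial free product or as infinite cyclic. Hence each $G_i$ fixes a vertex of $T$ (a unique one, since nontrivial groups cannot fix edges here), i.e.\ $G_i$ is conjugate into some $H_{\sigma(i)}$; symmetrically, using the Bass--Serre tree of the first decomposition, each $H_j$ is conjugate into some $G_{\tau(j)}$. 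Composing, $G_i$ is conjugate into a conjugate of $G_{\tau(\sigma(i))}$; since $G_i\neq 1$ and distinct free factors together with all their conjugates intersect trivially (again read off from the first tree), this forces $\tau(\sigma(i))=i$, and tracing the inclusions back then shows $G_i$ is in fact conjugate to $H_{\sigma(i)}$. Running the argument symmetrically makes $\sigma$ and $\tau$ mutually inverse bijections respecting the conjugacy classes of the factors with multiplicity, so $k=l$ and the $G_i$ are determined up to conjugacy; finally $r$ is forced by $\sum_i\operatorname{rank}(G_i)+r=\operatorname{rank}(G)=\sum_j\operatorname{rank}(H_j)+s$ together with the matching of the factors.

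The part I expect to require care is uniqueness, in two places. First, the claim that a finitely generated freely indecomposable non-cyclic group is elliptic in every tree with trivial edge stabilizers --- this is exactly where both hypotheses ``freely indecomposable'' and ``not infinite cyclic'' are used, and it should be extracted cleanly from Bass--Serre theory rather than reproved. Second, upgrading the two sub-conjugacy relations to an honest conjugacy and a multiplicity-respecting bijection, which hinges on the fact that a nontrivial free factor cannot be conjugated strictly inside another factor; this too is a statement about vertex stabilizers in the Bass--Serre tree and needs to be stated precisely. Existence, by contrast, is routine once Grushko's rank formula is available.
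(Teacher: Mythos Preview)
The paper does not prove this theorem; it is quoted as a classical background result without argument, so there is no proof to compare against. Your proposal is correct and follows the standard route: existence by induction on $\operatorname{rank}(G)$ using Grushko's rank formula, and uniqueness by observing that each $G_i$ --- being finitely generated (as a free factor of a finitely generated group, again by Grushko), freely indecomposable, and not infinite cyclic --- must fix a vertex in the Bass--Serre tree of any free-product decomposition of $G$, forcing it into a conjugate of some $H_j$. The two points you flag as needing care (the ellipticity claim, and upgrading mutual sub-conjugacy to an honest conjugacy-class bijection via the triviality of intersections of distinct vertex stabilizers) are exactly where the substance lies, and your outline handles both correctly.
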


They distinguish three kinds of automorphism, which generate the whole automorphism group:
\begin{itemize}
\item Factor automorphisms, which are automorphisms of just one free factor and don't affect the rest;
\item Permutation automorphisms, which permute isomorphic free factors according to a fixed, compatible set of isomorphisms;
\item Whitehead automorphisms, of two kinds: partial conjugations sending a free factor $G_i$ to $G_i^a$ and, if $G_i$ is an infinite cyclic factor, transvections sending $G_i$ to $aG_i$. In both cases $a$ is required to be an element of a different free factor.
\end{itemize}

Gilbert gives the following characterisation of subgroups generated by the factor and permutation automorphisms (denoted $\Fact(G)$ and $\Perm(G)$ respectively):
\begin{prop}[Gilbert, Proposition 3.1 of \cite{Gilbert1987AutFreeProd}]
\label{prop:structure_of_factor_perm_auts}
Let $G=G_1 \ast \dots G_k \ast F_r$ and suppose (after reordering if necessary) that $G_1, \dots G_d$ are representatives of (all the) distinct isomorphism classes of the $G_i$, and that the isomorphism class represented by $G_i$ occurs $n_i$ times. Also suppose that we take a fixed splitting of $F_r$ as a free product of infinite cyclic groups. Then: \begin{enumerate}
\item $\Fact(G)\cong \prod_{i=1}^d \Aut(G_i) \times C_2^r$
\item $\Perm(G) \cong \prod_{i=1}^d S_{n_i} \times S_r$
\item $\langle \Fact(G),\Perm(G) \rangle \cong \prod_{i=1}^d (\Aut(G_i) \wr S_{n_i}) \times (C_2 \wr S_r)$
\end{enumerate}
\end{prop}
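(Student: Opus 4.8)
The plan is to produce, once and for all, a faithful action on $G$ of the group appearing in (3), and then to recognise $\Fact(G)$ and $\Perm(G)$ inside it as the images of two natural subgroups. The data beyond the Grushko decomposition that this needs is exactly the ``compatible system of isomorphisms'' used to define permutation automorphisms, which I would pin down as follows: for each of the $d$ isomorphism classes fix the representative $G_i$ and an isomorphism $\theta_{ij}\colon G_{ij}\to G_i$ from each of the $n_i$ factors $G_{ij}$ in that class, and fix the given splitting of $F_r$ into infinite cyclic factors $\langle t_1\rangle,\dots,\langle t_r\rangle$, so that $\Aut\langle t_k\rangle\cong C_2$. By the universal property of the free product, a prescription sending each Grushko factor isomorphically onto another such factor determines an endomorphism of $G$, which is an automorphism exactly when the prescription is a bijection of the set of factors; this is the machine producing factor automorphisms, permutation automorphisms, and all their products.

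First I would construct a homomorphism
\[
\Phi\colon \bigl(\textstyle\prod_{i=1}^{d}\bigl(\Aut(G_i)\wr S_{n_i}\bigr)\bigr)\times\bigl(C_2\wr S_r\bigr)\longrightarrow\Aut(G).
\]
An element of the $i$-th wreath factor is a pair $\bigl((\phi_{ij})_{j=1}^{n_i},\sigma_i\bigr)$ with $\phi_{ij}\in\Aut(G_i)$ and $\sigma_i\in S_{n_i}$; send it to the automorphism carrying $G_{ij}$ onto $G_{i\sigma_i(j)}$ by the composite $\theta_{i\sigma_i(j)}^{-1}\circ\phi_{i\sigma_i(j)}\circ\theta_{ij}$ — labelling the automorphism applied by the \emph{destination} factor — and handle the $C_2\wr S_r$ coordinate on the $t_k$ the same way. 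That $\Phi$ is a homomorphism for the usual wreath-product multiplication is a short computation; since everything is routed through the fixed representatives, the ``compatibility'' of the $\theta_{ij}$ is automatic, so the only thing to watch is the index bookkeeping. Because automorphisms attached to factors in distinct isomorphism classes (or to $F_r$) act on disjoint parts of the free product they commute, so $\Phi$ really is defined on the displayed direct product.

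Next I would show $\Phi$ is injective: if $\Phi$ of an element is $\mathrm{id}_G$, then any $x$ in a factor $G_{ij}$ is sent into $G_{i\sigma_i(j)}$, so by uniqueness of the normal form for free products (equivalently, uniqueness up to conjugacy of the Grushko factors) we get $\sigma_i(j)=j$ for all $i,j$; hence all the permutations are trivial, and then $\theta_{ij}^{-1}\phi_{ij}\theta_{ij}=\mathrm{id}$ forces $\phi_{ij}=\mathrm{id}$, the same argument on the $t_k$ killing the $C_2\wr S_r$ part. So $\Phi$ is an isomorphism onto its image. To finish I would identify that image. By definition $\Fact(G)$ is generated by the automorphisms supported on a single Grushko factor, which are precisely the $\Phi$-images of the base subgroup $\prod_i\Aut(G_i)^{n_i}\times C_2^r$; as $\Phi$ is a homomorphism this gives (1). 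Likewise $\Perm(G)$ is generated by the permutation automorphisms, the $\Phi$-images of the ``top'' subgroup $\prod_i S_{n_i}\times S_r$, giving (2) (a different compatible system replaces this subgroup by a conjugate, so the isomorphism type is unaffected). Finally, a finite product of wreath products is generated by its base and top subgroups, so $\mathrm{im}\,\Phi=\langle\Fact(G),\Perm(G)\rangle$, and injectivity of $\Phi$ now yields (3). (Alternatively, with (1) and (2) in hand, (3) follows by checking directly that $\Perm(G)$ normalises $\Fact(G)$ — conjugating the factor automorphism supported on $G_{ij}$ by the permutation automorphism of $\sigma_i$ gives the one supported on $G_{i\sigma_i(j)}$, transported through the $\theta$'s — that $\Fact(G)\cap\Perm(G)=1$, and that the resulting semidirect product $\Fact(G)\rtimes\Perm(G)$ permutes coordinates within each isomorphism class.)

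I do not expect a deep obstacle: the substance of the proposition is unwinding the definitions. The part requiring the most care is the construction of $\Phi$ — fixing labelling conventions so that $\Phi$ is genuinely multiplicative, and so that the conjugation action of $\Perm(G)$ on $\Fact(G)$ is literally coordinate permutation inside each wreath product. The one genuinely external ingredient is the normal form theorem for free products, on which the injectivity of $\Phi$ rests.
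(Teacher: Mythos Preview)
The paper does not supply its own proof of this proposition: it is quoted verbatim as Gilbert's Proposition~3.1 and used as a black box, so there is nothing in the paper to compare your argument against.

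That said, your approach is sound and is essentially the standard way to establish such a statement. You build an explicit homomorphism $\Phi$ from the abstract product of wreath products into $\Aut(G)$ using the fixed system of isomorphisms $\theta_{ij}$, verify injectivity via the normal form for free products, and then read off (1), (2), (3) by matching the base and top subgroups of the wreath products with $\Fact(G)$ and $\Perm(G)$. The only place to be careful is exactly where you flag it: the bookkeeping in showing $\Phi$ is multiplicative (your convention of labelling the automorphism by the destination factor is one of the two natural ones, and works). One small remark: the statement of part~(1) as printed reads $\prod_{i=1}^d \Aut(G_i)\times C_2^r$, but what your argument (correctly) produces is the base group $\prod_{i=1}^d \Aut(G_i)^{n_i}\times C_2^r$; this is consistent with part~(3) and is what is actually used later in the paper, so the discrepancy is presumably typographical.
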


(The wreath products are permutation wreath products on a set of $n$ elements, not $n!$.)

If $G$ has no infinite cyclic factors we can consider the subgroup generated by partial conjugations. Write $(A,b)$ for the automorphism that conjugates every element of $A$ by $b$. 
The subgroup generated by these is denoted $\FR(G)$ and has the following presentation given explicitly as Proposition 3.1 of \cite{CollinsGilbert1990AutFreeProd} (althought it can be deduced from   \cite{FouxeRabinovitch1940AutFreeProd} and \cite{Gilbert1987AutFreeProd}):

\begin{prop}
\label{prop:aut_G_pres}
Suppose $G$ is a (non-trivial) free product of freely indecomposable groups with no infinite cyclic factors. Then the subgroup $\FR(G)$ of $\Aut(G)$ is generated by the partial conjugations $(A,b)$ subject to the relations: \begin{align}
(A,b)(A,b')&=(A,b'b) \label{rel:mult_table}\\
(A,b)(C,d)&=(C,d)(A,b) &\text{for $A \neq C, b \notin C, d \notin A$} \label{rel:commuters}\\
[(A,b)(C,b),(A,c)]&=1 &\text{for $A,B,C$ all different, $b \in B$, $c \in C$}
\end{align}
If each factor is finitely generated or presented, the same is true of $\FR(G)$, by rewriting the generators (eg) $(A,b)$ in terms of a (finite) generating set for the subgroup $B$, and eliminating all unnecessary relators.

Finally, a presentation for $\Aut(G)$ is found by adding a set of generators and relations for $\langle\Fact(G),\Perm(G) \rangle$ together with the relations $\varphi^{-1} (A,b) \varphi = (A\varphi,b\varphi)$ for each $\varphi \in \langle \Fact(G),\Perm(G) \rangle$.
\end{prop}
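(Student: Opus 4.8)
The plan is to treat this statement as an assembly of the cited structural results together with two routine additions: a Tietze-transformation argument for the finiteness claim about $\FR(G)$, and the standard presentation of a semidirect product for the claim about $\Aut(G)$.

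For the presentation of $\FR(G)$ by partial conjugations subject to \eqref{rel:mult_table}, \eqref{rel:commuters} and the third relation, I would quote Proposition~3.1 of \cite{CollinsGilbert1990AutFreeProd}; as the statement notes, it can alternatively be recovered by combining the generating set of \cite{FouxeRabinovitch1940AutFreeProd} with Gilbert's relation analysis in \cite{Gilbert1987AutFreeProd}. If one wanted a direct check that these relations \emph{hold}, it suffices to evaluate both sides on an arbitrary element of a free factor: \eqref{rel:mult_table} simply records that, for a fixed factor $A$ and second coordinate varying over a factor $B$, composing partial conjugations reproduces (up to order) the multiplication of $B$, while \eqref{rel:commuters} and the third relation are equally short computations. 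The real content is that these relations \emph{suffice}, and for that I would defer entirely to the cited work rather than reprove it.

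For the finiteness claim, fix a free factor $B$ with a finite generating set $b_1,\dots,b_n$ (and, when $B$ is finitely presented, a finite set of defining relators). From \eqref{rel:mult_table} one reads off $(A,b_i)(A,b_j)=(A,b_jb_i)$ and $(A,b)^{-1}=(A,b^{-1})$, so for each factor $A\neq B$ every generator $(A,b)$ with $b\in B$ is a word in the finitely many $(A,b_i)^{\pm1}$; discard the others. The surviving instances of \eqref{rel:mult_table} are then consequences of the finitely many relations $(A,b_i)(A,b_j)=(A,b_jb_i)$ (with $b_jb_i$ rewritten in the $b_l$) together with, for each defining relator $\rho$ of $B$, the relator recording $(A,\rho)=1$ as a word in the $(A,b_l)$; and \eqref{rel:commuters} and the third relation need only be imposed with their second coordinates ranging over generating sets of the relevant factors, the general instances following via \eqref{rel:mult_table}. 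Running this over the finitely many pairs of factors leaves a finite presentation when all factors are finitely presented, and a finite generating set when they are merely finitely generated.

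Finally, for $\Aut(G)$: by \cite{FouxeRabinovitch1940AutFreeProd} it is generated by $\FR(G)$ together with $\langle\Fact(G),\Perm(G)\rangle$, and a direct computation on factor elements gives $\varphi^{-1}(A,b)\varphi=(A\varphi,b\varphi)$ for $\varphi\in\langle\Fact(G),\Perm(G)\rangle$; hence $\FR(G)$ is normalised by $\langle\Fact(G),\Perm(G)\rangle$, so it is normal and $\Aut(G)=\FR(G)\rtimes\langle\Fact(G),\Perm(G)\rangle$. The intersection is trivial: a product of partial conjugations sends each marked free factor to a conjugate of itself, so it cannot realise a non-trivial permutation of the factors, and its restriction to a factor $G_j$ is conjugation by a product of elements lying in the \emph{other} factors; for such an element also to normalise $G_j$ it must be trivial, since the normaliser of $G_j$ in a free product is $G_j$ itself. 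The stated presentation of $\Aut(G)$ is then precisely the standard presentation of this semidirect product: adjoin to the presentation of $\FR(G)$ any presentation of $\langle\Fact(G),\Perm(G)\rangle$ (whose structure is described by Proposition~\ref{prop:structure_of_factor_perm_auts}) together with the action relations $\varphi^{-1}(A,b)\varphi=(A\varphi,b\varphi)$, imposed only for $\varphi$ and $(A,b)$ in the chosen generating sets. The single genuinely hard step — completeness of \eqref{rel:mult_table}, \eqref{rel:commuters} and the third relation as a presentation of $\FR(G)$ — is exactly what \cite{FouxeRabinovitch1940AutFreeProd} and \cite{Gilbert1987AutFreeProd} establish, which is why I would cite it; everything else is bookkeeping.
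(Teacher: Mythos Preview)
Your proposal is correct and matches the paper's own treatment: the paper does not prove this proposition but states it with exactly the same citations (Proposition~3.1 of \cite{CollinsGilbert1990AutFreeProd}, deducible from \cite{FouxeRabinovitch1940AutFreeProd} and \cite{Gilbert1987AutFreeProd}), and then records the semidirect product decomposition in one sentence by noting that inner factor automorphisms were excluded from $\FR(G)$ so the intersection with $\langle\Fact(G),\Perm(G)\rangle$ is trivial. Your Tietze-transformation argument for the finiteness claim and your normaliser argument for the trivial intersection are more explicit than what the paper writes, but they are the same ideas filled in.
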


Since inner factor automorphisms were excluded from $\FR(G)$, it has trivial intersection with $\langle\Fact(G),\Perm(G)\rangle$. So together with the final relation above giving that it is normal, we have that $\Aut(G) = \FR(G) \rtimes \langle \Fact(G),\Perm(G) \rangle$.


\subsection{Characteristic subgroups}

In order to extend the actions constructed in Theorem \ref{thm:has_not_FA} from one or two isomophism classes of factors to the whole group, we need to have access to certain quotients of $\Aut(G)$. These are given by considering characteristic subgroups of $G$.
\begin{defn}
A subgroup $H$ of a group $G$ is characteristic if $H\varphi = H$ for all automorphisms $\varphi$ of $G$.
\end{defn}

Characteristic subgroups include the commutator subgroup, the subgroup generated by all finite order elements, and the normal subgroup generated by all free factors of the same isomorphism class (once the group is written as a Grushko decomposition, and provided the isomorphism class is not $\Z$).

\begin{prop}
If $N$ is a characteristic subgroup of $G$, then there is a homomorphism $\Aut(G) \to \Aut (G/N)$ given by $\varphi \mapsto (Ng \mapsto N(g\varphi))$.
\end{prop}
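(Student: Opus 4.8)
The plan is to unpack the definition and check, in turn, that the assignment $Ng \mapsto N(g\varphi)$ is a well-defined set map on $G/N$, that it is a homomorphism, that it is bijective, and finally that $\varphi \mapsto \overline{\varphi}$ (where $\overline{\varphi}$ denotes this induced map) respects composition. Each step is a short diagram chase; the only place the hypothesis is used is in the first and third steps, where characteristicity of $N$ is exactly what is needed.

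First I would verify well-definedness: if $Ng = Ng'$, then $g'g^{-1} \in N$, so $(g'g^{-1})\varphi \in N\varphi = N$ because $N$ is characteristic, and hence $N(g'\varphi) = N(g\varphi)$. Thus $\overline{\varphi}\colon G/N \to G/N$, $Ng \mapsto N(g\varphi)$, is a genuine function. That it is a homomorphism is immediate from $\varphi$ being one: $\overline{\varphi}(Ng \cdot Nh) = \overline{\varphi}(Ngh) = N((gh)\varphi) = N(g\varphi)(h\varphi) = \overline{\varphi}(Ng)\,\overline{\varphi}(Nh)$.

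Next I would show $\overline{\varphi}$ is an automorphism of $G/N$. Since $\varphi$ is invertible and $N$ is characteristic, $\varphi^{-1}$ also preserves $N$, so the same construction produces a homomorphism $\overline{\varphi^{-1}}\colon G/N \to G/N$. Evaluating on a coset, $\overline{\varphi^{-1}}(\overline{\varphi}(Ng)) = \overline{\varphi^{-1}}(N(g\varphi)) = N((g\varphi)\varphi^{-1}) = Ng$, and symmetrically the other composite is the identity; hence $\overline{\varphi} \in \Aut(G/N)$ with inverse $\overline{\varphi^{-1}}$. Finally, for $\varphi,\psi \in \Aut(G)$ and any coset $Ng$ we have $\overline{\varphi\psi}(Ng) = N(g(\varphi\psi)) = N((g\varphi)\psi) = \overline{\psi}(N(g\varphi)) = \overline{\psi}(\overline{\varphi}(Ng))$, so $\varphi \mapsto \overline{\varphi}$ is a homomorphism $\Aut(G) \to \Aut(G/N)$ (up to the usual convention on the order of composition, which matches the right-action notation $g\varphi$ used throughout). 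There is no real obstacle here: the statement is formal, and the content is entirely in recording that ``characteristic'' is precisely the condition making both $\varphi$ and $\varphi^{-1}$ descend.
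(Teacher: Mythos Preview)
Your proof is correct and complete; the paper states this proposition without proof, treating it as a standard fact, so there is no argument to compare against. Your verification of well-definedness, the homomorphism property, bijectivity via $\overline{\varphi^{-1}}$, and compatibility with composition is exactly the routine check one would expect.
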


If $G$ is a free product and $N$ the normal subgroup generated by all free factors in a given isomorphism class this map is onto: the quotient group is the free product of the remaining factors, and all the generators involving only those factors are mapped to `themselves'. 

\subsection{Bass-Serre theory}

We restate some of the definitions and results of Bass-Serre theory, making sure the notation lines up with this paper. In particular, the action will be on the right. (This is closest to the exposition by Bass \cite{Bass1993}, but other expositions can be found in \cite{Serre2003} and \cite{DicksDunwoody}.)

\begin{defn}
	A \emph{graph of groups} $\mathcal{G}$ consists of a graph $\Gamma$  together with groups $G_v$ for every vertex and $G_e = G_{\overline{e}}$ for every (oriented) edge, and monomorphisms $\alpha_e:G_e \to G_{\tau(e)}$ for every (oriented) edge.
\end{defn}

(Here, the graph $\Gamma$ should be understood as it is defined by Serre, with edges in oriented pairs indicated by $\overline{e}$, and maps $\iota(e)$ and $\tau(e)$ from each edge to its initial and terminal vertices.)

The fundamental group of a graph of groups can be defined in two ways, with respect to a maximum tree of the graph, and by considering loops in the graph of groups. We take the second route, which simplifies some subsequent calculations.

\begin{defn}[Paths]
Let $F(\mathcal{G})$ be the group generated by all the vertex groups and all the edges of $\mathcal{G}$, subject to relations $e\alpha_e(g)\overline{e}=\alpha_{\overline{e}}(g)$ for $g \in G_e$. Note that taking $g=1$ this gives that $e^{-1}=\overline{e}$, as expected.

Define a \emph{path} (of length $n$) in $F(\mathcal{G})$ to be a sequence $g_0e_1g_1 \dots e_ng_n$, where each $e_i$ has $\iota(e_i)=v_{i-1}$ and $\tau(e_i)=v_i$ for some vertices $v_i$ (so there is a path in the graph), and each $g_i \in G_{v_i}$. A \emph{loop} is a path where $v_0=v_n$.
\end{defn}

The set of all paths in $F(\mathcal{G})$ forms a groupoid (sometimes called the fundamental groupoid of $\mathcal{G}$).

\begin{defn}[Reduced paths]
A path is \emph{reduced} if it contains no subpath of the form  $e\alpha_e(g)\overline{e}$ (for $g \in G_e$). A loop is \emph{cyclically reduced} if, in addition to being reduced, $e_n(g_ng_0)e_1$ is not of the form  $e\alpha_e(g)\overline{e}$.
\end{defn}

Every path is equivalent (by the relations for $F(\mathcal{G})$) to a reduced path, and similarly every loop is equivalent to both a reduced loop and a cyclically reduced loop. In general these reduced representations are not unique, although all equivalent (cyclically) reduced paths (or loops) will have the same edge structure. Note that a cyclically reduced loop might not be at the same vertex as the original loop.

\begin{defn}The \emph{fundamental group of $\mathcal{G}$} (at a vertex $v$) is the set of loops in $F(\mathcal{G})$ at $v$, and is denoted $\pi_1(\mathcal{G},v)$.
\end{defn}

The isomorphism class of this group does not depend on the vertex chosen. (In fact, the two groups obtained by choosing different base vertices are conjugate in the groupoid.)

We take the corresponding definition of the Bass-Serre tree:

\begin{defn}[Bass-Serre Tree]
Let $T$ be the graph formed as follows: the vertex set consists of `cosets' $G_{w}p$, where $p$ is a path in $F(\mathcal{G})$ from $w$ to $v$. There is an edge(-pair) joining two vertices $G_{w_1}p_1$ and $G_{w_2}p_2$ if $p_1=eg_{w_2}p_2$ or $p_2=eg_{w_1}p_1$ (where $g_w \in G_w$).
\end{defn}

The graph $T$ is a tree, usually called the Bass-Serre tree (or universal cover) for $\mathcal{G}$. Since loops at $v$ both start and finish at $v$, $\pi_1(\mathcal{G},v)$ acts on the right on the set of vertices, preserving adjacency.

\begin{defn}[Quotient graph of groups]
Given a group $G$ acting on a tree $T$, there is a \emph{quotient graph of groups} formed by taking the quotient graph from the action and assigning edge and vertex groups as the stabilisers of a representative of each orbit. Edge monomorphisms are then the inclusions, after conjugating appropriately if incompatible representatives were chosen.
\end{defn}

\begin{thm}[Structure theorem]
Up to isomorphism of the structures concerned, the processes of constructing the quotient graph of groups, and of constructing the fundamental group and Bass-Serre tree are mutually inverse.
\end{thm}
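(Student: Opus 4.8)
This is the classical structure theorem of Bass-Serre theory, and I would prove it along the lines of \cite{Bass1993} and \cite{Serre2003}, taking care that the conventions (right actions, loop-based fundamental group) match those fixed above. There are two statements to establish: first, that if one starts from a graph of groups $\mathcal{G}$ on a connected graph $\Gamma$, forms $\pi_1(\mathcal{G},v)$ and its Bass-Serre tree $T$, and then takes the quotient graph of groups of that action, one recovers $\mathcal{G}$; second, that if one starts from a group $G$ acting on a tree $T$, forms the quotient graph of groups $\mathcal{G}$, and then forms $\pi_1(\mathcal{G},v)$ acting on its Bass-Serre tree, one recovers $G$ acting on $T$. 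Throughout I take for granted that the Bass-Serre graph $T$ really is a tree, as asserted after its definition.

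For the first statement, define a graph morphism $T \to \Gamma$ by sending a vertex $G_w p$ to $w$, and similarly on edges. It is invariant under $\pi_1(\mathcal{G},v)$, so descends to $T/G \to \Gamma$. This is surjective since $\Gamma$ is connected: each vertex $w$ has preimage $G_w p_w$ for any chosen path $p_w$ from $w$ to $v$. It is injective because if $G_w p_1$ and $G_w p_2$ lie over the same $w$, then the loop $p_1^{-1}p_2$ at $v$ carries one to the other, so they share an orbit; hence $T/G \cong \Gamma$. Normalising so that $p_v$ is the trivial path, the stabiliser of $G_w p_w$ is exactly $p_w^{-1}G_w p_w$, canonically isomorphic to the vertex group $G_w$ (and literally equal to $G_v$ when $w=v$); the same calculation identifies edge stabilisers with edge groups and shows that the reconstructed edge monomorphisms agree with the original $\alpha_e$ up to the conjugations introduced by the $p_w$. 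So $\mathcal{G}$ is recovered up to isomorphism of graphs of groups.

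For the second statement, given $G$ acting on $T$, put $\Gamma = T/G$, fix a maximal subtree $\Gamma_0 \subseteq \Gamma$, lift it to a subtree $\widetilde{\Gamma_0}$ of $T$ (possible because $T$ is a tree), and for each edge $e$ outside $\Gamma_0$ choose a representative $\widetilde e$ of its orbit with $\iota(\widetilde e) \in \widetilde{\Gamma_0}$ together with an element $\gamma_e \in G$ carrying $\tau(\widetilde e)$ back into $\widetilde{\Gamma_0}$. Taking the stabilisers of these representatives as the vertex and edge groups (and the $\alpha_e$ as the inclusions, twisted by $\gamma_e$ at the far end of the edges outside $\Gamma_0$) yields $\mathcal{G}$. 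Define $\phi : \pi_1(\mathcal{G},v) \to G$ by sending each vertex-group generator to itself (it is already an element of $G$), each edge of $\Gamma_0$ to $1$, and each edge $e$ outside $\Gamma_0$ to $\gamma_e$; that the defining relations $e\,\alpha_e(g)\,\overline e = \alpha_{\overline e}(g)$ are respected is immediate from how the $\alpha_e$ were set up. Surjectivity follows once one checks that $G$ is generated by the chosen vertex stabilisers together with the $\gamma_e$: the union of the translates of $\widetilde{\Gamma_0}$ by the subgroup they generate is a connected subgraph, hence all of $T$, which forces that subgroup to equal $G$ since the stabiliser of any vertex of $\widetilde{\Gamma_0}$ is among the chosen generators. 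Finally, the developing map sending $G_w p$ to the endpoint in $T$ of the lift of $p$ starting at $\widetilde w \in \widetilde{\Gamma_0}$ is an equivariant graph morphism from the Bass-Serre tree of $\mathcal{G}$ to $T$; combined with injectivity of $\phi$ it is a bijection on vertices and edges, hence an isomorphism of trees, identifying the action of $\pi_1(\mathcal{G},v)$ on its Bass-Serre tree with that of $G$ on $T$.

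The real work, and the step I expect to be the main obstacle, is the injectivity of $\phi$ — equivalently, the normal-form statement that a reduced loop of positive length in $\pi_1(\mathcal{G},v)$ maps to a nontrivial element of $G$. The argument is that such a loop develops to a non-backtracking edge-path in $T$ beginning at the base vertex of $\widetilde{\Gamma_0}$, so that its $\phi$-image moves that vertex and cannot be $1$; the "no backtracking" step is precisely where the hypothesis that the loop contains no subpath $e\,\alpha_e(g)\,\overline e$ enters, since a backtrack between two consecutive developed edges would force the vertex-group element between them into the image of the relevant edge group, contradicting reducedness. Making this precise requires the correspondence between images of edge groups inside vertex stabilisers and the local structure of $T$; the remaining verifications are careful but routine bookkeeping with the orbit representatives and the twisting elements $\gamma_e$.
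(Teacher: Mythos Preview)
The paper does not prove this theorem at all: it is stated without proof as part of the background on Bass--Serre theory, with the reader referred to \cite{Bass1993}, \cite{Serre2003}, and \cite{DicksDunwoody} for details. Your sketch is a faithful outline of the classical argument found in those references (lifting a maximal tree, defining the developing map, and reducing injectivity to the normal-form/no-backtracking lemma), so there is nothing to compare against and no discrepancy to flag.
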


\subsection{Translation length}

The results in Section 4 require some calculations involving the translation length function for an action on a tree. This function was investigated in \cite{CullerMorgan1987Rtrees}; Section 1 of that paper proves many of its basic properties.

\begin{defn}[Translation length function]
	For a group $G$ acting on an ($\R$-)tree $T$ the \emph{translation length function} is $\Vert{-}\Vert:~G~\to~\R$ with $\Vert g \Vert = \inf_{x\in T} d(x,xg)$.
\end{defn}

If $g$ stabilises a point, then $\Vert g \Vert = 0$, and if $g$ is a hyperbolic element $\Vert g \Vert$ is the distance between a point on the axis and its image. Translation length is invariant under conjugation (that is, $\Vert h^{-1}gh \Vert =  \Vert g \Vert$). 
Also, if $T$ is a simplicial tree (with edge lengths equal to $1$), then the translation length function takes only integer values.

For the action of the fundamental group of a graph of groups on its Bass-Serre tree, using the definitions above, the translation length function is easy to calculate:

\begin{prop}
	\label{prop:translation_length_is_cyclic_reduced_length}
	Let $\mathcal{G}$ be a graph of groups, with fundamental group $G$, acting on its Bass-Serre tree $T$. For each element $g \in G$, the translation length $\Vert g \Vert$ is the path length of $g$ after cyclic reduction.
\end{prop}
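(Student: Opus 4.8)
\emph{Step 1: the distance formula.} The plan is first to make precise the dictionary between reduced paths in $F(\mathcal G)$ and geodesics in the Bass--Serre tree $T$. Fix the basepoint $x_0 = G_v \cdot 1 \in T$ and let $g$ be a loop at $v$ in \emph{reduced} form $g_0 e_1 g_1 \cdots e_n g_n$, with $v_i = \tau(e_i)$. I would show $d(x_0, x_0 g) = n$. For $d(x_0,x_0g)\le n$, take the vertices $y_i = G_{v_i}(e_{i+1}g_{i+1}\cdots e_n g_n)$ for $0\le i\le n$: then $y_n = x_0$, and $y_0 = G_v g$ since $g_0 \in G_v$, while consecutive $y_{i}$ and $y_{i-1}$ are adjacent straight from the definition of edges in $T$ (their paths differ by the single syllable $e_i g_i$). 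For $d(x_0,x_0g)\ge n$, run this in reverse: an edge-path in $T$ from $x_0$ to $x_0 g$ of length $\ell$ reads off a path expression for $g$ in $F(\mathcal G)$ of length $\ell$, and an immediate backtrack $y_{i-1} = y_{i+1}$ in the edge-path corresponds precisely to a subword $e\,\alpha_e(x)\,\overline e$; deleting all backtracks leaves a reduced expression, which has $n$ edges, so $\ell \ge n$.

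\emph{Step 2: reduction to cyclically reduced loops.} Let $h$, a loop at a vertex $w$, be the cyclically reduced form of $g$, of edge-length $m$. Each elementary cyclic-reduction step is conjugation by an initial segment $g_0 e_1$ of the current loop, so altogether $h = p^{-1} g p$ in the fundamental groupoid for a path $p$ from $v$ to $w$. The Bass--Serre trees $T_v, T_w$ built from the two basepoints are canonically isomorphic via $G_u q \mapsto G_u(qp)$, and this isomorphism intertwines the action of $g$ on $T_v$ with that of $h$ on $T_w$; hence $\Vert g\Vert = \Vert h\Vert$, and it suffices to prove $\Vert h\Vert = m$ with $h$ based at the basepoint of its own tree.

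\emph{Step 3: translation length of a cyclically reduced loop.} If $m = 0$ then $h$ lies in the vertex group $G_w$, which fixes the basepoint of $T_w$, so $\Vert h\Vert = 0 = m$. If $m\ge 1$, write $h = h_0 e_1 h_1 \cdots e_m h_m$ and let $\gamma = [x_0, x_0 h]$ be the geodesic of length $m$ from Step 1. Since $h$ acts by isometries, $\gamma h^k = [x_0 h^k, x_0 h^{k+1}]$, and I would show $L = \bigcup_{k\in\Z}\gamma h^k$ is a geodesic line. Within each $\gamma h^k$ there is no backtracking because $h$ is reduced; at the seam between $\gamma h^k$ and $\gamma h^{k+1}$, which after translating by a suitable power of $h$ becomes the seam between $\gamma h^{-1}$ and $\gamma$, a backtrack occurs exactly when $e_m(h_m h_0)e_1$ has the form $e\,\alpha_e(x)\,\overline e$ — precisely what cyclic reducedness forbids. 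So $L$ is a genuine line, $h$ translates it a distance $m$, hence $L$ is the axis of $h$ and $\Vert h\Vert = m$.

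The main obstacle is the lower bound in Step 1: turning an arbitrary geodesic of $T$ back into a reduced path and checking that a backtrack in the tree corresponds exactly to a cancellable subword $e\,\alpha_e(x)\,\overline e$ requires some care with the coset description of the edges of $T$. Everything after that is bookkeeping, the only further point being the (pleasant) observation that the defining condition for a cyclically reduced loop is literally the no-backtracking condition at the seams of the prospective axis.
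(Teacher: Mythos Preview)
The paper does not prove this proposition: it is stated at the end of the background section (Section~2.5) as a standard fact about Bass--Serre theory, with no argument given, and Section~3 begins immediately afterwards. So there is no proof in the paper to compare yours against.

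That said, your argument is the standard one and is correct. A small simplification of your Step~1 lower bound: once you have exhibited the path $y_0,\dots,y_n$ in $T$, you need only check that it has no backtracks, since in a tree any non-backtracking edge-path is geodesic. A backtrack $y_{i-1}=y_{i+1}$ forces $v_{i-1}=v_{i+1}$ and $e_i g_i e_{i+1}\in G_{v_{i-1}}$ in $F(\mathcal G)$; by the normal form theorem for $F(\mathcal G)$ (equivalently, the fact that $T$ is a tree, which the paper records as part of the Structure Theorem) this happens exactly when $e_{i+1}=\overline{e_i}$ and $g_i\in\alpha_{e_i}(G_{e_i})$, contradicting reducedness of $g$. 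This is morally what you wrote, but avoids having to ``read off'' an expression for $g$ from an arbitrary edge-path in $T$, which as you note is the fiddly part of the coset bookkeeping. Steps~2 and~3 are fine as written; the observation that cyclic reducedness is literally the seam condition for the prospective axis is exactly the point.
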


\section{Sufficient conditions}

In this section we prove the following sufficient conditions for the automorphisms of a free product to have Property (FA):

\sufficient

Since these conditions require each factor to have Property (FA), they are certainly freely indecomposable and not infinite cyclic. So their automorphism group decomposes as $\Aut(G) = \FR(G) \rtimes \langle\Fact(G),\Perm(G)\rangle$ as described in Propostition \ref{prop:aut_G_pres}. First we will show that the quotient $\langle \Fact(G),\Perm(G) \rangle$ has Property (FA). In \cite{CornulierKar2011Wreathprods} Cornulier and Kar characterise the permutational wreath products with Property (FA). Their result is:
\begin{thm}[Cornulier and Kar, Theorem 1.1 of \cite{CornulierKar2011Wreathprods}]
\label{thm:wreath_prods}
Let $G$ be a group that is a permutational wreath product $G=A\wr_X B$ where $A \neq 1, X \neq \emptyset$ and $X$ has finitely many $B$-orbits each with more than one element. Then $G$ has Property (FA) if and only if $B$ has Property (FA) and $A$ has finite abelianisation and cannot be expressed as the union of a properly increasing sequence of subgroups.
\end{thm}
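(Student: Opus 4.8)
The plan is to use Serre's three-condition characterisation and to argue mostly through actions on trees, writing $N=\bigoplus_{x\in X}A_x$ for the base group, so that $G=N\rtimes B$ and $B$ permutes the coordinate copies $A_x\cong A$ according to its action on $X$.

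For necessity I would use that the relevant conditions pass to quotients. Killing $N$ exhibits $B$ as a quotient of $G$, and since (FA) descends to quotients (an action of the quotient pulls back), $G$ having (FA) forces $B$ to have (FA). For the conditions on $A$ I would argue by contraposition through explicit structure: if $A=\bigcup_n A_n$ is a properly increasing union, then (since the wreath identifications between coordinates are the identity) each $\bigl(\bigoplus_X A_n\bigr)\rtimes B$ is a subgroup of $G$, and these form a properly increasing sequence with union $G$, so $G$ fails Serre's third condition and cannot have (FA). Similarly the sum-of-coordinates map $N\to A^{\mathrm{ab}}$ is $B$-invariant and so descends to a surjection $G\twoheadrightarrow A^{\mathrm{ab}}$; if $A^{\mathrm{ab}}$ has a $\Z$-quotient this gives $G$ a $\Z$-quotient, violating the second condition. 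The one genuinely delicate point here is upgrading ``no $\Z$-quotient'' to ``finite abelianisation'': for finitely generated (or merely countable) $A$ this is automatic from the previous two arguments, but an uncountable $A$ with infinite yet $\Z$-quotient-free abelianisation needs a separate argument, and I would expect this to be the subtle part of the necessity direction.

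For sufficiency, let $G$ act on a tree $T$; I must produce a global fixed point. The first and cleanest step is to show that every coordinate copy $A_x$ is \emph{elliptic}. Suppose some $g\in A_{x_1}$ were hyperbolic, with axis $L$. Choosing $x_2\neq x_1$ in the same $B$-orbit (possible as every orbit has more than one element) and $b\in B$ with $x_1b=x_2$, the copy $A_{x_2}=A_{x_1}^b$ contains the hyperbolic element $g^b$. But $A_{x_2}$ lies in a different coordinate from $g$, so it commutes with $g$ and hence preserves $L$; as commuting with $g$ forbids reversing the orientation of $L$, the copy $A_{x_2}$ acts on $L$ by translations, giving a homomorphism $A_{x_2}\cong A\to\R$. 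Finite abelianisation forces this to be trivial, so $A_{x_2}$ fixes $L$ pointwise and contains no hyperbolic element -- contradicting $g^b\in A_{x_2}$. Hence every element of every $A_x$ is elliptic. A group all of whose elements are elliptic either fixes a point or fixes an end; if $A_x$ fixed an end $\xi$ then along a ray to $\xi$ its vertex stabilisers would exhaust $A_x$ in an increasing union, which by the no-properly-increasing-union hypothesis must stabilise, again giving a fixed vertex. So each $A_x$ is elliptic.

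With each $A_x$ elliptic and distinct copies commuting, Lemma~\ref{lem:commuting_fixed_pt} together with the Helly-type intersection results (Lemmas~\ref{lem:naomi_intersection} and~\ref{lem:serre_intersection}) make every finite subproduct of $N$ elliptic, so every element of $N$ is elliptic. When $X$ is finite this immediately gives that $N$ is elliptic: $\Fix(N)$ is then a non-empty $B$-invariant subtree, $B$ has (FA) so fixes a point of it, and that point is fixed by all of $G=N\rtimes B$. The main obstacle is the case of infinite orbits, where $N$ is an infinite increasing union of elliptic subgroups and may legitimately fix an end rather than a point, so one cannot simply intersect the $\Fix(A_x)$. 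The resolution is to play the hypotheses on $A$ and $B$ against each other at the level of $G$: if $N$ fixed an end, then (using that $B$ has (FA) and so also fixes that end, and that there are only finitely many orbits) $G$ would be exhibited as an increasing union of subgroups $N_{v}\rtimes B$ along a ray, and the combination of $B$ having (FA) with the no-increasing-union property of the factors forces this chain to terminate, producing a genuine global fixed point. Making this last interplay precise -- essentially showing that the finitely-many-orbits and no-increasing-union hypotheses guarantee $G$ itself satisfies Serre's third condition -- is where I expect the real work of the sufficiency direction to lie.
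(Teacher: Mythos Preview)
This theorem is not proved in the paper at all: it is quoted verbatim as Theorem~1.1 of Cornulier--Kar \cite{CornulierKar2011Wreathprods} and then used as a black box (in Proposition~\ref{prop:quotient_has_FA} and Proposition~\ref{prop:no_FA_for_geq_two_factors}). There is therefore no proof in this paper to compare your proposal against.

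For what it is worth, your sketch is broadly along the lines of the Cornulier--Kar argument. The necessity direction is essentially as you describe (quotient to $B$; increasing unions lift; the $B$-invariant sum map to $A^{\mathrm{ab}}$), and you have correctly flagged the one genuinely non-trivial point there, namely upgrading ``no $\Z$-quotient'' to ``finite abelianisation'' when $A$ is not assumed countable. On the sufficiency side your ellipticity argument for each $A_x$ (playing a hyperbolic $g\in A_{x_1}$ against the commuting copy $A_{x_2}$ to force a homomorphism $A\to\Z$) is the right idea. The step ``every element elliptic $\Rightarrow$ fixed point or fixed end'' needs a little more care than you give it: what you actually get is that every finitely generated subgroup is elliptic, and one then uses the no-increasing-union hypothesis to conclude a global fixed point. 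Finally, you are right that the infinite-orbit case of showing $N$ (and then $G$) is elliptic is where the substance lies; your outline of how the hypotheses on $A$ and $B$ should interact there is plausible but, as you yourself say, not yet a proof.
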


Since Proposition \ref{prop:structure_of_factor_perm_auts} gives us a decomposition of $\langle \Fact(G),\Perm(G)\rangle$ as a direct product of permutational wreath products, we may use this result to investigate this subgroup.

\begin{prop}
\label{prop:quotient_has_FA}
Letting $G$ be as in Theorem \ref{thm:Aut(G)_has_FA}, the subgroup generated by factor and permutation automorphisms has Property (FA).
\end{prop}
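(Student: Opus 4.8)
The plan is to unwind the explicit structure of $\langle\Fact(G),\Perm(G)\rangle$ given by Proposition~\ref{prop:structure_of_factor_perm_auts}, reduce the claim to a statement about a single permutational wreath product, and then quote the Cornulier--Kar criterion, Theorem~\ref{thm:wreath_prods}.

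First I would record that, since every free factor of $G$ is assumed to have Property (FA), each is in particular freely indecomposable and not infinite cyclic; hence the Grushko decomposition of $G$ has free rank $r=0$, and the factor $C_2\wr S_r$ in Proposition~\ref{prop:structure_of_factor_perm_auts}(3) is trivial. So $\langle\Fact(G),\Perm(G)\rangle\cong\prod_{i=1}^{d}\bigl(\Aut(G_i)\wr S_{n_i}\bigr)$, where $G_1,\dots,G_d$ are representatives of the isomorphism classes of factors and $G_i$ occurs $n_i$ times. Because a finite direct product of groups with Property (FA) again has Property (FA) (the observation following Lemma~\ref{lem:commuting_fixed_pt}, applied inductively), it is enough to show that each wreath product $\Aut(G_i)\wr S_{n_i}$ has Property (FA), where the wreath product is the permutational one on the $n_i$-element set $X=\{1,\dots,n_i\}$ with the natural $S_{n_i}$-action.

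Next I would split according to the two cases of Theorem~\ref{thm:Aut(G)_has_FA}. If $n_i=1$ — which occurs only under hypothesis~(2), for the unique factor appearing once — then $\Aut(G_i)\wr S_1\cong\Aut(G_i)$, which has Property (FA) by assumption. If $n_i\geq 4$ — which covers every factor under hypothesis~(1) and every repeated factor under hypothesis~(2) — then by hypothesis $\Aut(G_i)$ has finite abelianisation and cannot be written as the union of a properly increasing chain of subgroups. I would then invoke Theorem~\ref{thm:wreath_prods} with $A=\Aut(G_i)$ and $B=S_{n_i}$: the natural action of $S_{n_i}$ on $X$ is transitive, so $X$ consists of a single $B$-orbit, and this orbit has $n_i\geq 4>1$ elements; moreover $B=S_{n_i}$ is finite and so has Property (FA). Thus all the hypotheses of Theorem~\ref{thm:wreath_prods} are met, and $\Aut(G_i)\wr S_{n_i}$ has Property (FA). In the degenerate subcase $\Aut(G_i)=1$, where Theorem~\ref{thm:wreath_prods} does not literally apply, the wreath product is just the finite group $S_{n_i}$, which again has Property (FA).

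I do not expect a substantive obstacle here: the whole content sits inside Proposition~\ref{prop:structure_of_factor_perm_auts} and Theorem~\ref{thm:wreath_prods}. The only things needing care are bookkeeping: checking that $r=0$ so the $C_2\wr S_r$ term genuinely disappears, verifying that the natural $S_{n_i}$-action is a single orbit of size strictly greater than one (which is exactly the applicability hypothesis of Theorem~\ref{thm:wreath_prods}, and is where $n_i\geq 2$ — here guaranteed by $n_i\geq 4$ — is used), and handling the trivial-automorphism-group case and the $n_i=1$ case separately.
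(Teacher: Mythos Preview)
Your proposal is correct and follows essentially the same approach as the paper: decompose $\langle\Fact(G),\Perm(G)\rangle$ via Proposition~\ref{prop:structure_of_factor_perm_auts}, apply the Cornulier--Kar criterion (Theorem~\ref{thm:wreath_prods}) to each wreath factor, and assemble using the direct-product observation after Lemma~\ref{lem:commuting_fixed_pt}. You are slightly more careful than the paper in explicitly noting $r=0$ and in handling the degenerate cases $n_i=1$ and $\Aut(G_i)=1$, but these are bookkeeping refinements rather than a different route.
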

\begin{proof}
By Proposition \ref{prop:structure_of_factor_perm_auts} this is a direct product of permutation wreath products. If $G$ satisfies part (1) of Theorem \ref{thm:Aut(G)_has_FA} then each of them satisfies the hypotheses of Theorem \ref{thm:wreath_prods}: since each $n_i \geq 4$ the set $X$ is  non empty; $S_{n_i}$ acts transitively on it so there is only one orbit; and from the hypotheses of Theorem \ref{thm:Aut(G)_has_FA} these $\Aut(G_i)$ have finite abelianisation and cannot be expressed as the union of a properly increasing sequence of subgroups, and $S_{n_i}$ is finite so has Property (FA). So each wreath product has Property (FA). If $G$ satisfies part (2) of Theorem \ref{thm:Aut(G)_has_FA}, then automorphism group of the singleton factor has Property (FA) by assumption, and all others satisfy the hypotheses we need for \ref{thm:wreath_prods} just as above. So, in either case, we have a direct product of groups with Property (FA). Their direct product must also have Property (FA), by inductively applying the argument of Lemma \ref{lem:commuting_fixed_pt}.
\end{proof}




Next we show that, whenever $\Aut(G)$ acts on a tree, the subgroup $\FR(G)$ has a fixed point. Most of the arguments are similar, and proceed by finding `enough commutation' that various elliptic subgroups are forced to have common fixed points, but we write them out in full.

\begin{prop}
\label{prop:FR(G)_has_fix}
Let $G$ be as in part (1) of Theorem \ref{thm:Aut(G)_has_FA}.
Then any action of $\FR(G)$ on a tree which extends to $\FR(G) \rtimes \Perm(G)$ has a global fixed point.
\end{prop}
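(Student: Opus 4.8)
The plan is to use the presentation of $\FR(G)$ from Proposition \ref{prop:aut_G_pres} together with the commutation relations among partial conjugations, and exploit the $\Perm(G)$-action to manufacture enough common fixed points. Fix an action of $\FR(G)\rtimes\Perm(G)$ on a tree $T$. The first step is to check that each generating partial conjugation $(A,b)$ is elliptic: by relation \eqref{rel:mult_table} the set of partial conjugations $\{(A,b): b\in B\}$ for a fixed pair of distinct factors $A,B$ generates a copy of $B$ (or a quotient thereof), which is a free factor of $G$, hence has Property (FA), hence is elliptic. So each such subgroup $\Gamma_{A,B}:=\langle (A,b): b\in B\rangle$ has a non-empty fixed subtree $\Fix(\Gamma_{A,B})$.

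Next I would organise the argument around showing all these subtrees $\Fix(\Gamma_{A,B})$ share a common point, which (since they generate $\FR(G)$) gives the conclusion. The key leverage is relation \eqref{rel:commuters}: whenever $A\neq C$ and the conjugating elements avoid the other subgroups, $(A,b)$ and $(C,d)$ commute, so by Lemma \ref{lem:commuting_fixed_pt} the corresponding subgroups have a common fixed point. Since there are at least four isomorphic copies of each factor, given any two ``index pairs'' $(A,B)$ and $(C,D)$ one can route through a third and fourth factor to get a chain of commuting (hence jointly elliptic) subgroups, and then use Lemma \ref{lem:serre_intersection} (the $X_i$ meet pairwise $\Rightarrow$ common point, valid here since finitely many subtrees) to conclude that the whole family $\{\Fix(\Gamma_{A,B})\}$ has non-empty total intersection — provided I can verify pairwise intersection. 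Pairwise intersection when the index sets overlap (say $(A,B)$ and $(A,D)$, or $(A,B)$ and $(C,B)$) is where relation \eqref{rel:commuters} does not directly apply, and this is where I expect to need Lemma \ref{lem:fixed_pt_new}(2): using a permutation $g\in\Perm(G)$ that moves $B$ to a fresh factor disjoint from everything in sight, one gets $\Gamma_{A,B}^g=\Gamma_{A,B'}$ commuting with $\Gamma_{A,B}$ (now the conjugating elements lie in different factors), and similarly conjugating the other subgroup; then Lemma \ref{lem:fixed_pt_new}(2) forces $\Gamma_{A,B}$ and the other subgroup to share a fixed point. This is exactly the point at which having \emph{four} copies (rather than two or three) of each factor is essential: one needs room to conjugate two overlapping index pairs simultaneously into disjoint ``fresh'' factors.

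The main obstacle, then, is the bookkeeping in the overlapping-index case: making sure that for any two generating subgroups one can choose a single permutation automorphism (or a short product of them, all inside $\Perm(G)$, so that the action is defined) that simultaneously relocates the relevant factors to new positions not interfering with either subgroup, so that relations \eqref{rel:commuters} and \eqref{rel:triangles}-type relations apply and Lemma \ref{lem:fixed_pt_new}(2) can be invoked. Once every pair $\Fix(\Gamma_{A,B})$, $\Fix(\Gamma_{C,D})$ is shown to intersect, Lemma \ref{lem:serre_intersection} gives a global fixed point for $\FR(G)=\langle \Gamma_{A,B}\rangle$, completing the proof. I would also remark that only the $\Perm(G)$-part of the extension is used, not $\Fact(G)$, matching the statement.
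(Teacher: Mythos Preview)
Your approach is essentially the paper's: show each $\Gamma_{A,B}$ is elliptic, verify pairwise intersection of the fixed subtrees case by case using commutation plus Lemma~\ref{lem:fixed_pt_new}(2) with a suitable permutation, then apply Lemma~\ref{lem:serre_intersection}.

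Two small corrections to your sketch. First, $(A,B)$ and $(C,B)$ \emph{do} commute directly by relation~\eqref{rel:commuters} (the condition is $A\neq C$, $b\notin C$, $d\notin A$, all satisfied since $b,d\in B$), so that case needs no permutation trick. Second, for the genuine overlap $(A,B)$ versus $(A,D)$, moving $B$ as you suggest does not help: $\Gamma_{A,B'}$ still shares first index $A$ with $\Gamma_{A,D}$ and they do not commute. The correct move (and the paper's) is to relocate the \emph{shared} index: take $\tau$ swapping $A$ with a fresh $C'\cong A$ distinct from $A,B,D$; then $(A,B)^\tau=(C',B)$ and $(A,D)^\tau=(C',D)$ each commute with both originals, and Lemma~\ref{lem:fixed_pt_new}(2) applies. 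The paper also treats the remaining overlap patterns $(A,B)$ versus $(B,D)$ and $(A,B)$ versus $(B,A)$ by similar permutation arguments, in one case invoking a previously established pair. The third relation in Proposition~\ref{prop:aut_G_pres} is not needed for this proposition; only relations~\eqref{rel:mult_table} and~\eqref{rel:commuters} are used.
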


\begin{proof}
The subgroup $\FR(G)$ is generated by finitely many subgroups $(A,B)$ consisting of all partial conjugations $(A,b)$, where $A$ is fixed, but $b$ ranges over all of some other factor $B$. This is isomorphic (in fact, anti-isomorphic) to $B$, and so since $B$ has Property (FA), all such subgroups are elliptic. By Lemma \ref{lem:serre_intersection}, if their fixed point subtrees intersect pairwise then their intersection is non-empty. 

So we check all the possible pairs $(A,B)$ and $(C,D)$: (Different letters always represent different subgroups; some combinations cannot occur due to the fact the inner factor automorphisms are excluded.) \begin{enumerate}
\item $(A,B)$ and $(C,D)$: 
These commute (by Relation (\ref{rel:commuters})), and so since they are elliptic there must be a common fixed point by Lemma \ref{lem:commuting_fixed_pt}.
\item $(A,B)$ and $(C,B)$: 
These subgroups commute by Relation (\ref{rel:commuters}). Since both are elliptic, they must have a common fixed point by Lemma \ref{lem:commuting_fixed_pt}.
\item $(A,B)$ and $(A,D)$: 
Since there are (at least) four isomorphic copies of each factor group, there is some $C'$ (different to $A,B,D$) such that $C' \cong A$. Letting $\tau$ be the permutation interchanging $A$ and $C'$, then $(A,B),(A,D)$ and $\tau$ satisfy the conditions of Lemma \ref{lem:fixed_pt_new}(2): $(A,B)^\tau =(C',B)$ and $(A,D)^\tau = (C',D)$ both commute with both $(A,B)$ and $(A,D)$ (by Relation (\ref{rel:commuters})) and so have common fixed points by Lemma \ref{lem:commuting_fixed_pt}. So $(A,B)$ and $(A,D)$ have a common fixed point.
\item $(A,B)$ and $(B,D)$ (and, by symmetry $(A,B)$ and $(C,A)$): 
This time take $C' \cong B$ so that $A,B,C',D$ are all different. Let $\tau$ swap $C'$ and $B$, so conjugating by $\tau$ gives $(A,C')$ and $(C',D)$. Now $(C',D)$ commutes with both the original elements, and $(A,C')$ commutes with $(C,D)$ (all by relation (\ref{rel:commuters})), and so there are fixed points in common by Lemma \ref{lem:commuting_fixed_pt}. Also, $(A,B)$ and $(A,C')$ fit the hypotheses of Case (3), and so they have common fixed point. So $(A,B),(B,D)$ and $\tau$ satisfy Lemma \ref{lem:fixed_pt_new} and there is a common fixed point.
\item $(A,B)$ and $(B,A)$: 
Take $C' \cong B$ and $D' \cong A$, so that $A,B,C',D'$ are different factors. Then let $\tau$ swap $C'$ with $B$, and $D'$ with $A$. The images after conjugating by $\tau$ (which are $(D',C')$ and $(C',D')$ respectively) commute (by Relation (\ref{rel:commuters})) and so have common fixed points (by Lemma \ref{lem:commuting_fixed_pt}) with $(A,B)$ and $(B,A)$, and so $(A,B)$, $(B,A)$ and $\tau$ satisfy Lemma \ref{lem:fixed_pt_new} so these subgroups have a fixed point.
\end{enumerate}

These pairwise intersections satisfy Lemma \ref{lem:serre_intersection}, so have a non-empty intersection. This is fixed by every element of $\FR(G)$, and so since these subgroups generate, this intersection is fixed by $\FR(G)$ which must itself be elliptic.
\end{proof}


Before proving the second case, we cover one aspect of the proof in a lemma.

By analogy with a wreath product, we make the following definition.

\begin{defn}
Let $H$ and $K$ be groups, and equip $K$ with an action on a set $X$. The \emph{wreathed free product} of $H$ and $K$ (with respect to the given action) is the semidirect product $H^{*|X|}\rtimes K$, where the action of $K$ on $H^{*|X|}$ is to permute the free factors according to the action on $X$.
\end{defn}

The symmetries induced by the $K$-action have the effect of restricting the trees such groups can act on, as we see in the following lemma (restricting to the action of $S_n$ on a set of $n$ elements):

\begin{lem}
\label{lem:stupid_lemma}
Suppose the wreathed free product $H^{*n}\rtimes S_n$ acts on a tree $T$, such that the free factor $H_1$ fixes a subtree $T_1$. Then\begin{enumerate}[label=(\arabic*)]
\item Each factor $H_i$ fixes a subtree $T_i$, and these are permuted by the action of $S_n$ on $T$.
\item There are branch points (vertices) $v_i \in T_i$ such that \begin{itemize}
\item $d(v_i,v_j)=d(T_i,T_j)$ for all $i,j$
\item The $v_i$ are permuted by the action of $S_n$ on $T$
\item The $v_i$ have a common branch point $w$, which is the midpoint of each geodesic $(v_i,v_j)$ (for $i\neq j$)
\end{itemize}
Let $T'$ be the convex hull of the $v_i$, shown in Figure \ref{fig:Tprime}.
\item If in addition the tree is a $\Z$-tree, or the group $H$ is finitely generated, there are elements $h_i\in H_i$ such that $\Fix(h_i)\cap T'=v_i$. These can be chosen so they are permuted by the action of $S_n$ on $H^{*n}$.
\item If $T'$ is not a single point, and (3) occurs, then (viewing $h_1$ and $h_2$ as their images in $\Isom(T)$) both $h_1h_2$ and $h_2h_1$ are hyperbolic, and there is no isometry $f \in \Isom(T)$ such that $f^{-1}h_1h_2f=h_2h_1$, $wf=w$ and $v_1f=v_1$.   
\end{enumerate}
\end{lem}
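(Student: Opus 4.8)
The plan is to prove the four parts in order; part~(2) carries essentially all the geometric content and is the main obstacle.

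\emph{Part (1) and the easy case of (2).} Since $S_n$ permutes the $n$ copies of $H$ transitively, for each $i$ there is $\sigma\in S_n$ with $H_i=H_1^{\sigma}$, and since conjugation by an isometry carries fixed subtrees to fixed subtrees, $T_i:=\Fix(H_i)$ is a non-empty subtree with $T_i\sigma'=T_{i\sigma'}$ for all $\sigma'$; this is (1). For (2) I would first note a dichotomy coming from the fact that $S_n$ acts $2$-transitively on the indices by isometries of $T$: if some pair $T_i,T_j$ meets then \emph{every} pair meets (apply a permutation carrying $(i,j)$ to any other ordered pair). In that case Serre's Lemma~\ref{lem:serre_intersection} gives a common point $w$; putting $v_i=w$ for all $i$ and $T'=\{w\}$ settles (2), and then (3) is trivial and (4) vacuous. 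So assume the $T_i$ are pairwise disjoint; by $2$-transitivity again, $d(T_i,T_j)$ is a constant $d>0$, and I set $\ell=d/2$.

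\emph{The disjoint case of (2).} Here I would work with the closed $\ell$-neighbourhoods $N_\ell(T_i)$: these are subtrees, they are permuted by $S_n$, and a direct check shows $N_\ell(T_i)\cap N_\ell(T_j)$ is the single point $m_{ij}$, the midpoint of the bridge $\beta_{ij}$ between $T_i$ and $T_j$. Applying Lemma~\ref{lem:serre_intersection} to the finitely many $N_\ell(T_i)$ yields a common point, which --- lying in each $N_\ell(T_i)\cap N_\ell(T_j)$ --- is unique and equals every $m_{ij}$; call it $w$. Thus all the bridge midpoints coincide at $w$, and $w$ is $S_n$-fixed. Taking $v_i$ to be the point of $T_i$ nearest to $w$, one verifies that $v_i$ is the foot on $T_i$ of \emph{every} bridge $\beta_{ij}$ (so $v_i$ does not depend on $j$), whence $d(v_i,v_j)=d(T_i,T_j)=d$ with $w$ the midpoint of $[v_i,v_j]$; the $v_i$ are permuted by $S_n$ because $w$ is fixed; and the segments $[w,v_i]$ pairwise meet only in $w$ (else two of the $T_i$ would be at distance $<d$), so $w$ is a genuine branch point with $n$ directions. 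Finally, for $n\ge 3$ a short argument using a third index shows $d$ is even (if $w$ were interior to an edge $e$, the transposition $(i\,k)$ fixing $w$ would force the direction from $w$ to $v_k$ to agree with that to $v_i$ or to $v_j$), so $w$ and the $v_i$ are vertices. This verification --- extracting a single centre $w$ and checking all the metric claims --- is where the work is.

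\emph{Parts (3) and (4).} From the above $T'\cap T_i=\{v_i\}$, so $H_i$ does not fix the germ of $T'$ at $v_i$ (else it would fix some point of $T'\setminus\{v_i\}$). Hence some $h_i\in H_i$ has $\Fix(h_i)\cap T'=\{v_i\}$: when $T$ is a $\Z$-tree this is immediate from simpliciality (it is enough that $h_i$ move the first vertex of $T'$ beyond $v_i$), and when $H$ is finitely generated such an $h_i$ may be taken from a generating set of $H_i$. Transporting a single good $h_1$ through the canonical identifications of the $n$ copies of $H$ gives an $S_n$-permuted family, proving~(3). For~(4), since $\Fix(h_1)\cap T'=\{v_1\}$, $\Fix(h_2)\cap T'=\{v_2\}$ and $[v_1,v_2]\subseteq T'$, the segment $[v_1,v_2]$ is exactly the bridge between $\Fix(h_1)$ and $\Fix(h_2)$; in particular these fixed sets are disjoint and at distance $d>0$. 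By the standard behaviour of a product of two elliptics with disjoint fixed sets, $h_1h_2$ and $h_2h_1$ are hyperbolic with axes containing $[v_1,v_2]$, but one of them translates along this segment from $v_1$ towards $v_2$ and the other from $v_2$ towards $v_1$. Now if some $f\in\Isom(T)$ had $f^{-1}h_1h_2f=h_2h_1$, $wf=w$ and $v_1f=v_1$, then on one hand $f$ fixes $[v_1,w]$ pointwise, hence fixes the direction at $w$ towards $v_1$; on the other hand, conjugating $h_1h_2$ to $h_2h_1$ while fixing $w$ forces $f$ to send the translation direction of $h_1h_2$ at $w$ to that of $h_2h_1$ at $w$ --- the opposite direction along $[v_1,v_2]$. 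So $f$ would send the two distinct directions at $w$ towards $v_1$ and towards $v_2$ to the same direction, which is impossible. Apart from part~(2), the only points needing care are the left/right-action bookkeeping in these last computations and, for $n=2$ (outside the intended application), the possibility that $w$ is the midpoint of an edge, handled by subdividing or by assuming the action is without inversions.
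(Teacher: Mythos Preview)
Your proof is correct, and your treatment of part~(2) is genuinely different from the paper's. The paper first argues directly that the nearest point $v_{ij}$ of $T_i$ to $T_j$ is independent of $j$ (via a triangle-inequality contradiction), names this $v_i$, and then locates the centre $w$ by an induction on $n$: the base case $n=3$ is the tripod with its y-point, and the induction step adjoins $v_n$ to the star already built for $v_1,\ldots,v_{n-1}$ and checks that the branch point is still the midpoint. Your route via the closed $\ell$-neighbourhoods $N_\ell(T_i)$ is slicker: once you observe that $N_\ell(T_i)\cap N_\ell(T_j)$ is the single bridge midpoint $m_{ij}$, one application of Lemma~\ref{lem:serre_intersection} produces a common point and forces all the $m_{ij}$ to coincide, so $w$ and the $v_i$ (as nearest points to $w$) fall out simultaneously with no induction. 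Both arguments use the $2$-transitivity of $S_n$ in the same way to reduce to the equidistant case. One small omission: in your degenerate case (the $T_i$ pairwise intersect) you set every $v_i$ equal to a common point $w$ of $\bigcap T_i$, but for ``the $v_i$ are permuted by $S_n$'' you need $w$ to be $S_n$-fixed; the paper obtains this by noting that $S_n$, being finite, acts on the non-empty subtree $\bigcap T_i$ with a fixed point. Your parts~(1), (3), and~(4) are essentially the paper's arguments, with~(4) phrased in terms of directions at $w$ rather than tracking the image of the segment $[v_1,w]$ as the paper does.
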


\begin{figure}[ht]
\center
\begin{tikzpicture}
\draw[thick] (0,0) -- (90:2);
\draw[thick] (0,0) -- (60:2);
\draw[thick] (0,0) -- (30:2);
\draw[thick] (0,0) -- (120:2);
\draw[thick] (0,0) -- (330:2);

\node at (90:2.3) {$v_1$};
\node at (60:2.3) {$v_2$};
\node at (30:2.3) {$v_3$};
\node at (330:2.5) {$v_i$};
\node at (120:2.3) {$v_n$};
\node[below left] at (0,0) {$w$};

\draw[thick,dotted] (315:1.5) arc (315:130:1.5);
\draw[thick,dotted] (20:1.5) arc (380:340:1.5);
\end{tikzpicture}
\caption{The graph $T'$ described in Lemma \ref{lem:stupid_lemma}(2)}
\label{fig:Tprime}
\end{figure}
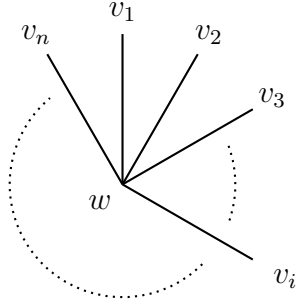

\begin{proof}
Write the elements of $S_n$ as (eg) $\sigma_{(12)}$, denoting that it swaps $H_1$ and $H_2$.
\begin{enumerate}[label=(\arabic*)]
\item Since $\sigma_{(1i)}^{-1}H_{1}\sigma_{(1i)}=H_i$, we must have that $H_i$ fixes precisely $T_i:=T_{1}\sigma_{(1i)}$, which in particular is non empty.
\item If $n=1$ then we may choose any global fixed point for the $H$-action. If any (and therefore every) pair has a common fixed point, in fact the $H^{*n}$ action must be elliptic. Since the $S_n$ action permutes the $T_i$, it acts on their (non-empty) intersection. Since it is finite, it does so with a fixed point, which will be a fixed point for the whole action. Let every $v_i$ be this fixed point; then this one point subtree works.

Otherwise, since $S_n$ acts $2$-transitively on the set $\{T_i\}$, and is acting by isometries, we have that $d(T_i,T_j)=\lambda(1-\delta_{ij})$ where $\lambda$ is a positive constant and $\delta_{ij}$ is the Kronecker delta. Let $v_{ij}$ (with $i\neq j$) be the nearest point in $T_i$ to $T_j$. In fact this is the same point as $j$ varies, since if there were $j,k$ such that $v_{ij}$ and $v_{ik}$ were different, we would have $d(T_j,T_k) = d(v_{ji},v_{ij}) + d(v_{ij},v_{ik})+d(v_{ik},v_{ki}) > d(T_i,T_j)+d(T_i,T_k)$. Since all three distances are equal, this isn't the case. Call this common nearest point $v_i$; then we have that $d(v_i,v_j)=d(T_i,T_j)$ as we wanted. Since the action is by isometries, $d(T_1,T_j)=d(v_1,T_j)=d(v_1\sigma_{(1i)},T_j\sigma_{(1i)})=d(v_1\sigma_{(1i)},T_{j(1i)})=d(T_i,T_{j(1i)})$. So $v_1\sigma_{(1i)}$ is $v_i$, and they are permuted by the $S_n$ action.

If $n=2$ then since $\sigma_{(12)}$ must swap $T_1$ and $T_2$, it will invert the geodesic. So (after subdividing if necessary) there must be a fixed vertex $w$ at the same distance from $v_1$ and $v_2$. Otherwise, it is an induction argument with base case $n=3$.

For $n=3$ consider the vertices $v_1,v_2,v_3$, and their y-point, $w$. This tripod contains three geodesics (joining the $v_i$) which must all be the same length, so each arm of the tripod (ie each distance $d(w,v_i)$) is the same length. For the induction step, consider the tripod for $v_1,v_2$ and $v_n$ together with the star for $v_1,\dots,v_{n-1}$. Both subtrees contain the geodesic $(v_1,v_2)$, and so in particular contain $w$. In fact, $w$ must be the y-point for the tripod, since it is still the midpoint of $(v_1,v_2)$. So $d(w,v_n)$ is again the same length, and $w$ is the y point for any triple of the $v_i$.



\item For each $h \in H_1$ consider the intersection $\Fix(h)\cap T'$. This is a collection of closed sets whose intersection must be the point $v_1$. In fact, we can consider a generating set for $H_i$ since the intersection of the generators' fixed subtrees is precisely $\Fix(H_1)$. So if $H_i$ is finitely generated, this can be a finite collection, and so one (or more) of the sets must only contain $v_1$. Alternatively, if the tree is a $\Z$-tree, then the diameter of each set must be an integer. But then in order for the intersection to consist only of the boundary point $v_1$, at least one of the sets must also be just $v_1$. Choose an $h_1 \in H_1$ such that $Fix(h_1)\cap T'=v_1$, and let $h_i \in H_i$ be $\sigma_{(1i)}^{-1}h_1\sigma_{(1i)}$ for each $i$. By definition, the $h_i$ are permuted by the action of $S_n$, and $Fix(h_i)\cap T' = Fix(h_1)\sigma_{(1i)} \cap T' =(Fix(h_1)\cap T')\sigma_{(1i)}=v_1\sigma_{(1i)}=v_i$ as needed.

\item If $T'$ is not a single point, then $h_1$ and $h_2$ have no common fixed point, and so the element $h_1h_2$ is hyperbolic: its axis includes the geodesic $(v_1,v_2)$, and its translation length is $2d(v_1,v_2)$. The same is true of the element $h_2h_1$, although it translates the other way along the common segment. 
Suppose we have an isometry $f$ which fixes the vertices $v_1$ and $w$, and thus the segment $(v_1,w)$.
Consider how our two elements move this fixed segment: $h_2h_1$ must move it past $v_1$. But $f^{-1}h_1h_2f$ can't move it past $v_1$: $f^{-1}h_1h_2$ moves it along $\Axis(h_1h_2)$, and then the nearest point of $\Fix(f)$ is closer than (in fact, along the geodesic to) $v_1$ so the segment must stay the same side. But then $f^{-1}h_1h_2f \neq h_2h_1$, and so there is no isometry with the properties in the statement.
\qedhere
\end{enumerate}
\end{proof}

\begin{prop}
\label{prop:ugly_FR(G)_has_fix}
Let $G$ be as in part (2) of Theorem \ref{thm:Aut(G)_has_FA}. Then
any action of $\FR(G)$ on a tree which extends to $\FR(G) \rtimes \Perm(G)$ has a global fixed point.
\end{prop}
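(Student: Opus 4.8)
The plan is to re‑run the argument of Proposition \ref{prop:FR(G)_has_fix} --- reducing to pairwise intersections of fixed‑point sets of the generating partial conjugations --- with the wreathed‑free‑product Lemma \ref{lem:stupid_lemma} stepping in to replace the ``fourth isomorphic copy'' of a factor in precisely the cases that involve the once‑occurring factor. Write $G = G_0 \ast K$, where $G_0$ is the factor occurring exactly once and $K$ is the free product of the remaining factors; then $K$ satisfies part (1) of Theorem \ref{thm:Aut(G)_has_FA} and $\Perm(K) = \Perm(G)$. As before $\FR(G)$ is generated by the subgroups $(A,C) = \{(A,c): c \in C\}$, each anti‑isomorphic to the factor $C$ and hence elliptic since every factor --- $G_0$ included --- has Property (FA); so by Lemma \ref{lem:serre_intersection} it is enough to find, for every pair of these, a common fixed point. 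Split the generators into the families (i) $(A,C)$ with $A,C$ both factors of $K$; (ii) $(A,G_0)$ with $A$ a factor of $K$; (iii) $(G_0,C)$ with $C$ a factor of $K$.

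Every pair not involving a type (iii) subgroup is handled essentially as in Proposition \ref{prop:FR(G)_has_fix}. The type (i) subgroups generate $\FR(K)$, which is elliptic by that proposition applied to $K$ (and it is here, as there, that the $K$-factors occurring at least four times is used). The type (ii) subgroups pairwise commute by relation (\ref{rel:commuters}) --- distinct first coordinates, conjugating elements in $G_0$ --- so Lemma \ref{lem:commuting_fixed_pt} applies; and a type (ii) subgroup paired with one of type (i) either commutes with it or reproduces the pattern of cases (3) or (4) of Proposition \ref{prop:FR(G)_has_fix}, which again go through because the $K$-factors occur at least four times, feeding Lemma \ref{lem:fixed_pt_new}(2). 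The genuinely new pairs are those in which $G_0$ is the factor being conjugated --- type (iii) against type (iii), and type (ii) against type (iii) --- where the ``extra copy'' trick fails because $G_0$ occurs only once; this is the heart of the proof.

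For such a pair I would invoke Lemma \ref{lem:stupid_lemma}. Take for instance $(G_0,B)$ and $(G_0,B')$ with $B'\cong B$, and let this isomorphism class of $K$-factors occur $n\ge 4$ times. Then $\langle (G_0,B^{(1)}),\dots,(G_0,B^{(n)})\rangle \rtimes S_n$ is a wreathed free product $B^{\ast n}\rtimes S_n$ (the map realising $B^{\ast n}$ inside $\FR(G)$ is injective, as the centraliser of $G_0$ in $G$ meets $K$ trivially), so Lemma \ref{lem:stupid_lemma} applies, its part (3) being available because our trees are simplicial, hence $\Z$-trees. If the convex hull $T'$ that it produces is a single point then, by part (2), this whole subgroup --- in particular the pair $(G_0,B),(G_0,B')$ --- is elliptic. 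If $T'$ is not a point, parts (3) and (4) give elements $h_i = (G_0,b_i)\in(G_0,B^{(i)})$ with $h_1h_2$ hyperbolic and no isometry $f$ of $T$ with $f^{-1}h_1h_2f = h_2h_1$, $wf=w$ and $v_1 f = v_1$. But an appropriate partial conjugation $(B^{(2)},b)$ with $b\in B^{(1)}$ is such an $f$: relation (\ref{rel:mult_table}) gives $h_1h_2 = (G_0,b_2b_1)$ and $h_2h_1=(G_0,b_1b_2)$, and a short computation shows that conjugating $B^{(2)}$ by an inverse of $b_1$ carries $(G_0,b_2b_1)$ to $(G_0,b_1b_2)$; meanwhile relation (\ref{rel:commuters}) shows this partial conjugation centralises $(G_0,B^{(j)})$ for every $j\ne 2$, so it preserves each $\Fix((G_0,B^{(j)}))$ with $j\ne 2$ and therefore fixes the associated points $v_j$ (each the common nearest point of its subtree to the others) and their common branch point $w$ --- and there are at least two such indices, so $v_1$ and $w$ are among the fixed points. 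This contradicts Lemma \ref{lem:stupid_lemma}(4), so $T'$ must be a point. The cross‑class pairs $(G_0,B),(G_0,D)$ with $B\not\cong D$ and the type (ii)--type (iii) pairs are treated in the same spirit: if there were no common fixed point one produces, using the $\Z$-tree alternative in Lemma \ref{lem:stupid_lemma}(3), elements whose product is hyperbolic, and then the appropriate partial conjugation --- of $D$ by (an inverse of) $b_1$ for $(G_0,B)$ versus $(G_0,D)$ --- conjugates that product the required way by relation (\ref{rel:mult_table}) while centralising, by relation (\ref{rel:commuters}), enough of the other partial conjugations to fix the branch points supplied by Lemma \ref{lem:stupid_lemma}, giving the same contradiction. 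Once every pair has been dealt with, Lemma \ref{lem:serre_intersection} produces a point fixed by all of $\FR(G)$.

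The main obstacle, I expect, is exactly the last verification in each configuration: that the partial conjugation chosen as the ``forbidden isometry'' really conjugates $h_1h_2$ to $h_2h_1$ --- a bookkeeping computation with relation (\ref{rel:mult_table}) that is sensitive to the composition and conjugation conventions --- and, more delicately, that it fixes the points $v_1$ and $w$ coming out of Lemma \ref{lem:stupid_lemma}. The latter depends on recognising those points as canonically determined by subtrees that the partial conjugation preserves, which holds because it centralises (via relation (\ref{rel:commuters})) the corresponding subgroups; deciding which partial conjugation to use, and which subtrees it preserves, in each of the several configurations is where the real work lies.
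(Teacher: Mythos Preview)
Your overall architecture matches the paper's: reduce to pairwise common fixed points among the generating subgroups $(A,B)$, handle the purely repeated-factor pairs by Proposition~\ref{prop:FR(G)_has_fix}, and use Lemma~\ref{lem:stupid_lemma} for the subgroups conjugating the once-occurring factor. Your treatment of the \emph{same-class} type~(iii) case --- the wreathed free product $B^{\ast n}\rtimes S_n$ built from the $(G_0,B^{(i)})$ --- is exactly the paper's argument for its subgroup~(iii), and your forbidden isometry $(B^{(2)},b_1^{-1})$ is the right one. One correction: the ``short computation'' that this isometry conjugates $h_1h_2$ to $h_2h_1$ uses the \emph{third} relation $[(A,b)(C,b),(A,c)]=1$, not relation~(\ref{rel:mult_table}); relation~(\ref{rel:mult_table}) alone cannot produce the required conjugacy.

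The genuine gap is in your handling of the remaining pairs: type~(iii) against type~(iii) across different isomorphism classes, and type~(ii) against type~(iii). You write that these are ``treated in the same spirit'' via Lemma~\ref{lem:stupid_lemma}, but that lemma is stated and proved only for a wreathed free product $H^{\ast n}\rtimes S_n$; its parts~(2)--(4) depend on having the $S_n$-symmetry permuting the subtrees $T_i$, which is exactly what fails once the two conjugating factors lie in different isomorphism classes (or once one of them is $G_0$ itself). In particular there are no vertices $v_1,w$ of the kind part~(4) needs. The paper does \emph{not} push Lemma~\ref{lem:stupid_lemma} into these cases; instead it exploits the third relation directly. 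For example, for $(G_0,A)$ versus $(B,G_0)$ one uses $[(B,a)(G_0,a),(B,g_0)]=1$: the product $(B,a)(G_0,a)$ is elliptic (two commuting elliptics, Lemma~\ref{lem:commuting_fixed_pt}) and commutes with $(B,g_0)$, forcing a common fixed point among all three and hence between $(G_0,a)$ and $(B,g_0)$; then Lemma~\ref{lem:naomi_intersection} upgrades this to the full subgroups. The cross-class pair $(G_0,A)$ versus $(G_0,B)$ is handled by the analogous relation $[(G_0,a)(B,a),(G_0,b)]=1$. You should also list the type~(i)--type~(iii) pairs explicitly (they are not ``genuinely new'', as you say, but they still need to be checked: they either commute or reduce via the extra-copy trick applied to the \emph{repeated} factor, and one sub-case --- the analogue of your case~(4) --- needs the type~(ii)--type~(iii) result as input).
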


\begin{proof}
This is the same idea as for Proposition \ref{prop:FR(G)_has_fix}, but depends even more on having access to symmetries required by the permutation automorphisms. 

In any action of $\FR(G)$ on a tree the following three subgroups have global fixed points: \begin{enumerate}[label=(\roman*)]
\item The subgroup generated by all partial conjugations of repeated factors by repeated factors - this is what was proved in Proposition \ref{prop:FR(G)_has_fix}
\item The subgroup generated by all partial conjugations where the conjugating group is the non-repeated factor. This is a direct product of several copies of that factor, and so will have Property (FA) since it does.
\item Each subgroup generated by $(K,h_i)$, where $K$ is the non-repeating factor, and the $H_i$ are all $n \geq 4$ copies of some repeating factor. This generates a group isomorphic to $H^{*n}$, and we use Lemma \ref{lem:stupid_lemma}. The permutation automorphisms normalise this subgroup, and so together with it generate a wreathed free product $H^{*n}\rtimes S_n$, which must act on any tree the full automorphism group does. Suppose it is not elliptic, so part (3) holds since we are considering actions on a $\Z$-tree. Let $(K,h_1)$ and $(K,h_2)$ be two of the elements described in that part. By the final commutation relation for $\FR(G)$, we have that $[(K,h_1)(H_2,h_1),(K,h_2)]=1$. Expanding (and moving some commuting elements past each other) this gives that $(K,h_1)(K,h_2)=(H_2,h_1)^{-1}(K,h_2)(K,h_1)(H_2,h_1)$. In addition, $(H_2,h_1)$ commutes with and so has common fixed points with all $(K,h_i)$ with $i \neq 2$, and so fixes $v_1$ and the central vertex. But then the isometry induced by $(H_2,h_1)$ has precisely the properties forbidden by part (4), so we have a contradiction. So in fact this subgroup must be elliptic.
\end{enumerate}

All subgroups of the form $(A,B)$ are contained in one of these subgroups, so must themselves be elliptic. As before, we check that any pair of these subgroups have a common fixed point. Pairs drawn from the same subgroup are already done, so we check the cases where they are drawn from different subgroups. Some cases are by commuting subgroups, others rely on Lemma \ref{lem:fixed_pt_new} and so are similar to the technique used in the previous result, and others need the use of the final relation. Denote by $K$ the factor occuring once, and by $A,B, \dots$ any of the factors that appear at least four times. As before, different letters denote different factors.\begin{enumerate}
\item $(A,B)$ and $(C,K)$: These commute and therefore have a common fixed point (by Relation \ref{rel:commuters} and Lemma \ref{lem:commuting_fixed_pt}).
\item $(A,B)$ and $(A,K)$: Let $\tau$ be the permutation swapping $A$ and some $C' \cong A$ (different to $A$ and $B$). Conjugating by $\tau$ gives $(C',B)$ and $(C',K)$, which both commute with both original subgroups. So by Lemma \ref{lem:fixed_pt_new} we get that our elements have a common fixed point.
\item $(A,B)$ and $(B,K)$ In the second case, let $C' \cong B$, different to $A$ and $B$, and let $\tau$ swap $B$ and $C'$. After conjugating, both have common fixed points with the original subgroups: $(A,C')$ and $(A,B)$ by case (3) of Proposition \ref{prop:FR(G)_has_fix} and the rest since they commute. So we satisfy Lemma \ref{lem:fixed_pt_new} and there is a common fixed point.
\item $(A,B)$ and $(K,C)$ or $(A,B)$ and $(K,B)$ commute so there will be a common fixed point. 
\item $(A,B)$ and $(K,A)$: In this case we need $\tau$ to swap $A$ and a $C' \cong A$, giving $(C',B)$ and $(K,C')$. These both have common fixed points with both original elements, in three cases because they commute, and in the fourth because of Case (3) above. So we have the common fixed points we need to once again deploy Lemma \ref{lem:fixed_pt_new} to give us a common fixed point.
\item $(K,A)$ and $(B,K)$: Consider $[(B,a)(K,a),(B,k)]=1$ (another of the second kind of commutator relation). Since they commute (and are elliptic), $(B,a)(K,a)$ is elliptic. 
Also, $(B,k)$ is elliptic, and so since these elements commute (and are elliptic) there is a common fixed point between $(B,a)(K,a)$ and $(B,k)$. But this means there is a common fixed point between all three elements, and so in particular between $(K,a)$ and $(B,k)$. We now apply Lemma \ref{lem:naomi_intersection} twice: first, fix $a \in A$ and vary $k \in K$ to see that $\Fix(K,a)$ and $\Fix(B,K)$ have non empty intersection for all $a \in A$. Then this gives that $\Fix(K,A)$ and $\Fix(B,K)$ have non-empty intersection, as we wanted.
\item $(A,K)$ and $(K,A)$: Let $B' \cong A$ (but different), and let $\tau$ be the permutation automorphism swaping $A$ and $B'$. Again, these satisfy Lemma \ref{lem:fixed_pt_new}: the four pairs are $(A,K)$ and $(B',K)$ which commute; $(K,A)$ and $(K,B')$ which are both in the third kind elliptic subgroup identified at the start of the proof; $(A,K)$ and $(K,B')$, and $(K,A)$ and $(B',K)$ which satisfy the previous case. So this final pair also have a common fixed point.
\item $(K,A)$ and $(K,B)$ where $A \ncong B$. Consider $[(K,a)(B,a),(K,b)]=1$: just as above, this gives a common fixed point between $(K,a)$ and $(K,b)$ and then applying Lemma \ref{lem:naomi_intersection} gives common fixed points between $\Fix(K,A)$ and $\Fix(K,B)$. \qedhere
\end{enumerate}

%
%
\end{proof}

Propositions \ref{prop:quotient_has_FA}, \ref{prop:FR(G)_has_fix} and \ref{prop:ugly_FR(G)_has_fix} provide the proof of Theorem \ref{thm:Aut(G)_has_FA}, as follows:

\begin{proof}[Proof of Theorem \ref{thm:Aut(G)_has_FA}]
An action of $\Aut(G)$ on a tree defines an action of $\FR(G)$ on the same tree. Since $\FR(G) \rtimes \Perm(G) \leq \Aut(G)$, this action must extend to the permutation automorphisms. So by Proposition \ref{prop:FR(G)_has_fix} or \ref{prop:ugly_FR(G)_has_fix} this subgroup is elliptic. Now consider $v \in \Fix(\FR(G))$: we have that $vhg=vg'h=vh$ for all $g\in \FR(G),h \in \langle\Fact(G), Perm(G) \rangle$, where $g'=hgh^{-1}\in \FR(G)$. So $\langle\Fact(G), \Perm(G)\rangle$ acts on the fixed point set of $\FR(G)$. Since it has Property (FA) by Proposition \ref{prop:quotient_has_FA} that action will have a fixed point, which must be a fixed point for the whole action. So $\Aut(G)$ also has Property (FA).
\end{proof}


\section{Necessary conditions}
The results in this section, taken together, will prove all parts of Theorem \ref{thm:has_not_FA}. First we deal with the (shorter) parts (2) and (3), and then afterwards part (1).

\necessary

\begin{prop}
\label{prop:no_FA_for_one_factor}
Let $G$ be a free product of (freely indecomposable) groups, with no infinite cyclic factors. Suppose there is some free factor $H$ whose isomorphism class appears exactly once in the Grushko decomposition, and $\Aut(H)$ does not have Property (FA). Then $\Aut(K)$ does not have Property (FA).
\end{prop}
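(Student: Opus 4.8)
The plan is to exhibit $\Aut(H)$ as a quotient of $\Aut(G)$ and then use the elementary fact that Property (FA) passes to quotients: if a group $\Gamma$ surjects onto a group $Q$ that does not have Property (FA), then $\Gamma$ does not have Property (FA) either, since an action of $Q$ on a tree with no global fixed point pulls back along $\Gamma \twoheadrightarrow Q$ to an action of $\Gamma$ on the same tree, and a global fixed point for $\Gamma$ would be fixed by the whole image $Q$, a contradiction.

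To build the quotient I would use the material on characteristic subgroups. Write the Grushko decomposition as $G = G_1 \ast \dots \ast G_k$ (there is no free part, by hypothesis), with $H = G_j$ the factor whose isomorphism class appears exactly once. For every isomorphism class of factors \emph{other} than that of $H$, the normal closure in $G$ of the union of the factors in that class is characteristic (as recalled in Subsection~2.3, using that no factor is infinite cyclic). Let $N$ be the join of these subgroups, equivalently the normal closure of $\bigcup_{i \neq j} G_i$; as a join of characteristic subgroups it is characteristic, and collapsing it kills every factor except $G_j$, so the retraction $G \to H$ induces an isomorphism $G/N \cong H$. By the Proposition immediately preceding this one there is then a homomorphism $\Phi \colon \Aut(G) \to \Aut(G/N) \cong \Aut(H)$. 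This $\Phi$ is surjective: given $\psi \in \Aut(H)$, the factor automorphism $\tilde{\psi}$ of $G$ acting as $\psi$ on $H$ and as the identity on every other $G_i$ satisfies $\Phi(\tilde{\psi}) = \psi$ under the identification $G/N \cong H$, because on $H$ it is literally $\psi$. Hence $\Aut(H)$ is a quotient of $\Aut(G)$.

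Since $\Aut(H)$ does not have Property (FA), it acts on some simplicial tree $T$ with no global fixed point; composing with $\Phi$ gives an action of $\Aut(G)$ on $T$, which again has no global fixed point (such a point would be fixed by $\Phi(\Aut(G)) = \Aut(H)$). Therefore $\Aut(G)$ does not have Property (FA). There is no genuinely hard step here: the only points requiring care are that $N$ is indeed characteristic — which is exactly the content of the cited facts about characteristic subgroups of free products, and where the no-infinite-cyclic-factor hypothesis is used — and that $\Phi$ is onto, for which exhibiting the factor automorphisms of $H$ as preimages is enough.
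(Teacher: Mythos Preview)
Your proposal is correct and follows the same overall strategy as the paper: exhibit $\Aut(H)$ as a quotient of $\Aut(G)$ and pull back an action without global fixed point. The only difference is in how the quotient map is produced. The paper uses the semidirect product decomposition $\Aut(G) = \FR(G) \rtimes \langle\Fact(G),\Perm(G)\rangle$ (available since there are no infinite cyclic factors) to first quotient onto $\langle\Fact(G),\Perm(G)\rangle$, and then projects onto the direct summand $\Aut(H) \wr S_1 \cong \Aut(H)$ arising from the unique copy of $H$ via Proposition~\ref{prop:structure_of_factor_perm_auts}(3). You instead kill the characteristic normal closure of the other factors and invoke the induced map $\Aut(G)\to\Aut(G/N)$, checking surjectivity by lifting to factor automorphisms. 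Both arguments are short and rest on structure already set up in Section~2; the paper's route leans on Propositions~\ref{prop:structure_of_factor_perm_auts} and~\ref{prop:aut_G_pres}, while yours uses the characteristic-subgroup framework of Subsection~2.3, which is precisely the tool the paper later uses in Corollary~\ref{cor:No_FA_for_counts} for the analogous reductions. There is no substantive gap in your argument.
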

\begin{proof}
By \ref{prop:aut_G_pres} (and since there are no infinite cyclic factors) the group $\langle\Fact(G), \Perm(G)\rangle$ is a quotient of $\Aut(G)$. Then by part (3) of Proposition \ref{prop:structure_of_factor_perm_auts}, one of the direct summands of this group is $\Aut(H)$. So $\Aut(H)$ is a quotient of $\Aut(G)$. Since $\Aut(H)$ has an action on a tree without global fixed point, the same is true of $\Aut(G)$.
\end{proof}

\begin{prop}
\label{prop:no_FA_for_geq_two_factors}
Let $G$ be a free product of (freely indecomposable) groups, with no infinite cyclic factors. Suppose there is a free factor $H$ that (up to isomorphism) appears at least two times in the decomposition, and $\Aut(H)$ does not have finite abelianisation or can be expressed as a union of a properly increasing sequence of subgroups. Then $\Aut(G)$ does not have Property (FA).
\end{prop}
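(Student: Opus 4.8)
The plan is to exhibit a quotient of $\Aut(G)$ that already fails Property (FA), and then appeal to Cornulier and Kar's characterisation of permutational wreath products with Property (FA). By Proposition \ref{prop:aut_G_pres}, and since $G$ has no infinite cyclic factors, $\langle \Fact(G),\Perm(G)\rangle$ is a quotient of $\Aut(G)$. By Proposition \ref{prop:structure_of_factor_perm_auts}(3) this group is isomorphic to $\prod_{i=1}^d \bigl(\Aut(G_i)\wr S_{n_i}\bigr)$ (the $C_2\wr S_r$ factor being trivial as $r=0$), and one of its direct factors is $\Aut(H)\wr S_n$, where $n=n_j\geq 2$ is the number of times the isomorphism class of $H$ occurs. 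A direct factor is a quotient, so $\Aut(H)\wr S_n$ is a quotient of $\Aut(G)$.

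Next I would verify that $\Aut(H)\wr S_n$ satisfies the hypotheses of Theorem \ref{thm:wreath_prods}: the wreath product is permutational with respect to the action of $B=S_n$ on the set $X$ of $n$ points, with $A=\Aut(H)\neq 1$, and $X\neq\emptyset$ since $n\geq 2$; moreover $X$ is a single $B$-orbit, of size $n\geq 2$, so it has finitely many orbits each with more than one element. Thus Theorem \ref{thm:wreath_prods} applies and tells us that $\Aut(H)\wr S_n$ has Property (FA) if and only if $S_n$ has Property (FA)---which it does, being finite---\emph{and} $\Aut(H)$ has finite abelianisation and cannot be expressed as the union of a properly increasing sequence of subgroups. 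By hypothesis at least one of these last two conditions on $\Aut(H)$ fails, so $\Aut(H)\wr S_n$ does not have Property (FA); equivalently, it admits an action on a tree with no global fixed point.

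Finally, pulling such an action back along the quotient map $\Aut(G)\twoheadrightarrow\Aut(H)\wr S_n$ gives an action of $\Aut(G)$ on a tree in which a global fixed point for $\Aut(G)$ would descend to a global fixed point for the quotient; since there is none, $\Aut(G)$ does not have Property (FA). I do not expect a genuine obstacle here beyond checking the (mild) hypotheses of Theorem \ref{thm:wreath_prods}; the only place where the assumption ``at least two times'' is actually used, rather than ``exactly once'', is in ensuring that the $S_n$-set $X$ has more than one element in each orbit, which is precisely what is required to invoke the result of Cornulier and Kar.
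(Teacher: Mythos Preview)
Your proof is correct and follows essentially the same route as the paper's: pass to the quotient $\langle\Fact(G),\Perm(G)\rangle$, identify the direct summand $\Aut(H)\wr S_n$, and apply Cornulier--Kar (Theorem~\ref{thm:wreath_prods}) to see this summand (hence the quotient, hence $\Aut(G)$) fails Property~(FA). You have simply spelled out the hypothesis check for Theorem~\ref{thm:wreath_prods} more carefully than the paper does; note in particular that $\Aut(H)\neq 1$ is automatic from the assumption on $\Aut(H)$.
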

\begin{proof}
Again, by \ref{prop:aut_G_pres} and since there are no infinite cyclic factors $\langle\Fact(G), \Perm(G)\rangle$ is a quotient of $\Aut(G)$. Then again using part (3) of Proposition \ref{prop:structure_of_factor_perm_auts}, one of the direct summands of this group is $\Aut(H)^n \wr S_n$. By Theorem \ref{thm:wreath_prods} this does not have Property (FA). So since $\Aut(G)$ has a quotient that does not have Property (FA), neither does $\Aut(G)$.
\end{proof}

For part (1) of Theorem \ref{thm:has_not_FA}, we will extend the action of $G$ on a Bass-Serre tree to its (outer) automorphisms. It is useful to view an action of a group $G$ by isometries on a tree $T$ as a homomorphism $G \to \Isom(T)$.

We recall a theorem of Culler and Morgan on $\R$-trees and translation length:

\begin{thm}[Culler-Morgan, 3.7 of \cite{CullerMorgan1987Rtrees}]
\label{thm:Culler_Morgan}
If a group $G$ acts minimally and without preserving the orientation of an invariant line on $\mathbb{R}$-trees $T_1$ and $T_2$ with the same translation length function, then there is a unique $G$-equivariant isometry $f:T_1 \to T_2$. That is, $f$ is the unique isometry such that with $f^*:~Isom(T_1)~\to~Isom(T_2)$ defined by $\varphi f^* =f^{-1}\varphi f$ this diagram commutes.

\centerline{\xymatrix@R-20pt{
& Isom(T_1) \ar[dd]^{f^*} \\
G \ar[ur] \ar[dr] & \\
& Isom(T_2) \\
}}
\end{thm}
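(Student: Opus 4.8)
The plan is to prove uniqueness first, which is short, and then existence, which carries the real content. For uniqueness, note that if $f,f'\colon T_1\to T_2$ are both $G$-equivariant isometries then $f^{-1}\circ f'$ is a $G$-equivariant self-isometry of $T_1$, so it is enough to show that every $G$-equivariant self-isometry $\phi$ of $T_1$ is the identity. The fixed-point set $\Fix(\phi)$ is a subtree, and it is $G$-invariant because $\phi$ commutes with the $G$-action; if it is non-empty, minimality forces $\Fix(\phi)=T_1$, so $\phi=\mathrm{id}$. If $\Fix(\phi)=\emptyset$, then $\phi$ is hyperbolic and its axis is a canonical, hence $G$-invariant, line, so by minimality $T_1$ equals that line and $\phi$ acts on it as a non-trivial translation commuting with all of $G$; but by hypothesis $G$ does not preserve the orientation of this invariant line, so some $g\in G$ acts on it as a reflection, which cannot commute with a non-trivial translation --- a contradiction. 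Hence $\phi=\mathrm{id}$ and $f=f'$.

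For existence I would first dispose of the degenerate case $\Vert g\Vert=0$ for all $g\in G$: then every element is elliptic on both trees, and a minimal action of $G$ on an $\R$-tree with every element elliptic fixes a point (otherwise $G$ fixes a unique end, and then a horoball about that end is a proper non-empty $G$-invariant subtree), so $T_1$ and $T_2$ are single points and there is nothing to prove. So assume $h\in G$ has $\Vert h\Vert>0$; then $h$ is hyperbolic on each $T_i$, with $\Axis_i(h)$ of length $\Vert h\Vert$, and by minimality each $T_i$ is the union of the axes of its hyperbolic elements (this union is a non-empty $G$-invariant subtree, since disjoint axes are bridged by the axis of a suitable product). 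The key input is that the relative position in $T_i$ of the axes of hyperbolic elements is an invariant of the translation length function: for hyperbolic $g_1,g_2$ the numbers $\Vert g_1\Vert,\Vert g_2\Vert,\Vert g_1g_2\Vert,\Vert g_1g_2^{-1}\Vert$ (together with analogous words in three generators, controlling how triples of axes interlock and in which orientations) determine whether $\Axis_i(g_1)$ and $\Axis_i(g_2)$ are disjoint --- and if so, the distance between them --- or meet --- and if so, the length and the coherence of orientation of their common segment. Since these numbers are the same for the two actions, the whole combinatorial configuration of axes agrees in $T_1$ and in $T_2$.

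Granting this, I would define $f$ by sending each $\Axis_1(g)$ via the unique $g$-equivariant, orientation-matching isometry onto $\Axis_2(g)$ (the residual orientation ambiguity being fixed once, using a single base hyperbolic element, and propagated by the combinatorial data), then check that these partial maps agree on every overlap $\Axis_1(g_1)\cap\Axis_1(g_2)$ --- which is precisely where the length-function invariance of the configuration is used --- so that they glue to a well-defined map $f\colon T_1\to T_2$. Finally one verifies that $f$ is an isometry (a surjective local isometry from an $\R$-tree) and $G$-equivariant, the latter amounting to the statement that the construction of $f$ intertwines the two $G$-actions, which follows because the axis-configuration data it is built from is a class function and so transforms correctly under $G$-conjugation; this intertwining is exactly the commuting square in the statement. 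I expect the main obstacle to be the middle paragraph: carrying out the case analysis of how two, and then three, axes can sit relative to one another in an $\R$-tree and matching each configuration to the identities for translation lengths of products. That analysis is also the point at which the hypothesis that $G$ preserves the orientation of no invariant line is needed --- namely to exclude the configuration in which $T_1$ collapses to a line and the orientation, hence $f$ itself, fails to be pinned down.
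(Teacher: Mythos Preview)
The paper does not contain a proof of this theorem: it is quoted from Culler--Morgan \cite{CullerMorgan1987Rtrees} as a black box, with only the remark that their actual hypothesis is ``minimal and semisimple'' (and ``not a shift'' for uniqueness), and that they do not phrase the conclusion via the commutative diagram. So there is no in-paper argument to compare your proposal against.

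That said, your outline is broadly the Culler--Morgan strategy: uniqueness via the observation that a $G$-equivariant self-isometry has $G$-invariant fixed set (or axis), and existence by reconstructing the tree from the combinatorics of axes, which are determined by the length function through identities in $\Vert g_1\Vert$, $\Vert g_2\Vert$, $\Vert g_1g_2^{\pm1}\Vert$. Two caveats worth flagging. First, your handling of the degenerate case (all elements elliptic) implicitly assumes the action is semisimple: on an $\R$-tree one can have minimal actions with every element elliptic and no global fixed point (a ``parabolic'' action fixing an end but no horoball), and your horoball argument does not quite exclude this without further work; Culler--Morgan avoid this by hypothesis. Second, as you yourself note, the real content is the axis-configuration case analysis, and your proposal stops short of carrying it out; what you have is a correct plan rather than a proof.
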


Their condition on the action is that it is minimal and semisimple (either irreducible, a single point, dihedral or a shift), and for uniqueness that it is not a shift. They also do not give the interpretation as a commutative diagram.

We will consider the following subgroup of automorphisms:

\begin{defn}
	Suppose $G$ is a group acting on a tree $T$. Let $\Aut_T(G)$ be the subgroup of $\Aut(G)$ that preserves the translation length function of the action.
\end{defn}

The uniqueness part of their theorem allows us to prove the following corollary:

\begin{cor}
\label{cor:trans_length_pres}
Given a group $G$ acting minimally and without preserving the orientation of an invariant line on a $\R$-tree $T$, with associated translation length function, then $\Aut_T(G)$ acts by isometries on $T$. Further, \begin{enumerate}
\item The inner automorphism given by conjugating by $g$ induces the same isometry as $g$. So if the original action has no fixed points, the same is true for this action.
\item The action of $\Aut_T(G)$ is compatible with the action of $G$, in the sense that the subgroup $G \rtimes \Aut_T(G)$ of the holomorph acts on T with the given actions of each factor.
\item If the original tree was a $\Z$-tree then the action constructed is also an action on a $\Z$-tree, after subdividing if necessary to remove edge inversions.
\end{enumerate}
\end{cor}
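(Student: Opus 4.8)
The engine here is the \emph{uniqueness} half of Theorem~\ref{thm:Culler_Morgan}; the rest is bookkeeping. The plan is as follows. Fix the given action $\rho\colon G\to\Isom(T)$, with translation length function $\Vert{-}\Vert$. For each $\varphi\in\Aut_T(G)$ form the \emph{twisted action} $\rho_\varphi$, in which $g\in G$ acts by the isometry $\rho(g\varphi^{-1})$. Since $\varphi^{-1}$ is an automorphism this is again a homomorphism $G\to\Isom(T)$, and its image in $\Isom(T)$ coincides with that of $\rho$. Because $\varphi^{-1}\in\Aut_T(G)$ preserves $\Vert{-}\Vert$, the action $\rho_\varphi$ has the same translation length function as $\rho$; and because $\rho_\varphi$ and $\rho$ have the same image subgroup acting on the same tree, $\rho_\varphi$ is again minimal and does not preserve the orientation of any invariant line (these are properties of the subgroup of $\Isom(T)$ and of $T$, not of the indexing by $G$). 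So Theorem~\ref{thm:Culler_Morgan} applies and yields a unique $G$-equivariant isometry $f_\varphi\colon T\to T$ intertwining $\rho_\varphi$ with $\rho$; unwinding the commuting diagram, $f_\varphi$ is the unique isometry with $f_\varphi^{-1}\rho(g)f_\varphi=\rho(g\varphi)$ for all $g\in G$.

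Next I would check that $\varphi\mapsto f_\varphi$ is a homomorphism $\Aut_T(G)\to\Isom(T)$, i.e.\ an action of $\Aut_T(G)$ on $T$ by isometries in the paper's right-action convention. This is immediate from uniqueness: $\mathrm{id}_T$ satisfies the defining relation for $\varphi=\mathrm{id}$, and for $\varphi,\psi\in\Aut_T(G)$ one computes directly that $f_\varphi f_\psi$ satisfies the defining relation for $\varphi\psi$, hence $f_{\varphi\psi}=f_\varphi f_\psi$. This gives the headline claim.

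For the three further points: (1) Translation length is conjugation invariant (noted just after its definition), so $\operatorname{Inn}(G)\leq\Aut_T(G)$; for $\varphi$ the inner automorphism determined by an element $g_0$ one checks that $\rho(g_0)$ itself satisfies the defining relation, so $f_\varphi=\rho(g_0)$ by uniqueness. Hence the image of the new action contains $\rho(G)$, so any global fixed point of the $\Aut_T(G)$-action is a global fixed point of $\rho$; if $\rho$ has none, neither does the extension. (2) The defining relation $f_\varphi^{-1}\rho(g)f_\varphi=\rho(g\varphi)$ is exactly the relation making $\rho$ and $\varphi\mapsto f_\varphi$ assemble into a well-defined action of $G\rtimes\Aut_T(G)\leq G\rtimes\Aut(G)$ on $T$ restricting to the prescribed actions on the two factors, so this falls out with no extra work. (3) If $T$ is a $\Z$-tree then each $f_\varphi$ is an isometry of a $\Z$-tree; such isometries respect the simplicial structure once one passes to a (barycentric) subdivision, as flagged in the statement, to remove edge inversions — the subdivision is canonical, hence compatible with all the isometries in play, and the subdivided tree is still a $\Z$-tree — so the constructed action, together with that of $G$, is an action on a $\Z$-tree.

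I expect the only real friction to be convention-chasing: pinning down the left/right composition and $\varphi$-versus-$\varphi^{-1}$ choices so that $\varphi\mapsto f_\varphi$ comes out as a genuine (right) action rather than an anti-action, and so that the diagram of Theorem~\ref{thm:Culler_Morgan} is literally the semidirect-product compatibility needed in part~(2). Once those are fixed, every assertion is a one-line consequence of Culler--Morgan's existence-and-uniqueness statement.
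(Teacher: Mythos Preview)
Your approach is essentially the paper's: twist the action by $\varphi$, invoke Culler--Morgan for existence, and use uniqueness to get the homomorphism property and parts~(1)--(2). Your observation that minimality and non-lineality are properties of the image subgroup (hence inherited by the twisted action) is a point the paper leaves implicit, so you are if anything more careful there.

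There is, however, a genuine gap in your argument for~(3). You assert that ``such isometries respect the simplicial structure once one passes to a (barycentric) subdivision,'' treating this as a general fact about isometries of $\Z$-trees. It is not: an irrational translation of the real line (with vertices at integers) is an isometry of the underlying $\R$-tree that is not simplicial after any subdivision. What forces $f_\varphi$ to preserve the vertex set is its interaction with the simplicial $G$-action. The paper argues this explicitly: isometries carry branch points to branch points, and in a $\Z$-tree branch points are vertices, with the remaining vertices at integer distance from them; the line case (no branch points) is handled separately using that the hypotheses guarantee an orientation-reversing element of $G$, whose fixed vertex must be sent by $f_\varphi$ to the fixed point of another such element --- again a vertex. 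You need some argument of this kind; the bare subdivision claim does not suffice.
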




Groups acting on trees may have a non-trivial centre, for example in $\SL(2,\Z)$ the centre has order $2$. However, if the action has at least two axes, or a single axis and an elliptic element that does not preserve its orientation, then the centre of the group must be in the kernel of the action. So since two elements inducing the same inner automorphism must already have the same image in $\Isom(T)$, the isometry described in (1) is unique.


\begin{proof}[Proof of Corollary \ref{cor:trans_length_pres}]
Given an action~$\cdot: G \to \Isom(T)$, and any automorphism $\varphi$ of $G$, there is another action defined by $*_\varphi = \varphi \circ \cdot: G \to G \to \Isom(T)$. That is, $t *_\varphi g = t \cdot (g \varphi)$.


For any $\varphi \in \Aut_T(G)$, this action will have the same translation length function as $\cdot$. So we may apply Theorem \ref{thm:Culler_Morgan} to the actions $\cdot$ and $*_\varphi$ to give a unique isometry $f_\varphi$ of $T$ (corresponding to the automorphism $\varphi$) such that the following diagram commutes:

\centerline{\xymatrix{
G \ar[r]^{\cdot} \ar[d]_\varphi \ar@{-->}[dr]^{*_\varphi} & Isom(T) \ar[d]^{f^*_\varphi} \\
G \ar[r]^\cdot & Isom(T)
}}

These isometries do give an action: for $*_1 = \cdot$, the identity map on $T$ is an equivariant isometry of $T$ making the diagram commute. So by uniqueness, $f_1=Id$. Now for $\varphi,\psi \in \Aut_T(G)$, consider this diagram:

\centerline{\xymatrix{
G \ar[r]^{\cdot} \ar[d]_\varphi \ar@{-->}[dr]^{*_\varphi} & Isom(T) \ar[d]^{f^*_\varphi} \\
G \ar[r]^{\cdot} \ar[d]_\psi \ar@{-->}[dr]^{*_\psi} & Isom(T) \ar[d]^{f^*_\psi} \\
G \ar[r]^\cdot & Isom(T)
}}

Here, the top square shows the equivariant isometry induced by $\varphi$, and the bottom square that by $\psi$. We want to consider the element $\varphi\psi$, which should also induce a unique equivariant isometry. However, from the diagram, the composition $f_\varphi f_\psi$ is just such an equivariant isometry, and so it must be the unique $f_{\varphi\psi}$.

To see (1): let $g$ be some element of $G$, and consider the inner automorphism that conjugates by $g$ (called $\delta(g)$). 
In this case, the usual action of $g$ is an equivariant isometry for the conjugation, since we have that $(x\cdot g) *_{\delta(g)}h = (x \cdot g) \cdot (g^{-1}hg) = x \cdot hg = (x \cdot h) \cdot g$. As a commutative diagram (where the right hand arrow is induced by the action of $g$):

\centerline{\xymatrix{
G \ar[r]^{\cdot} \ar[d]_{\delta(g)} \ar@{-->}[dr]^{*_{\delta(g)}} & Isom(T) \ar[d] \\
G \ar[r]^\cdot & Isom(T)
}}

So, by uniqueness, $f_{\delta(g)}=\cdot g$. 

To see (2): we need to check that the isometries corresponding to $(g\varphi)$ (as an element of $G$) and $\varphi^{-1}g\varphi$ are the same for all $g \in G$ and $\varphi \in \Aut_T(G)$. This is immediate from the commutative diagram in the statement of Theorem \ref{thm:Culler_Morgan} with the actions $\cdot$ and $\ast_\varphi$, since the downwards arrow is then precisely conjugation by the isometry corresponding to $\varphi$.

To see (3): The induced isometries must send branch points to branch points (of the same valence). In a $\Z$-tree, since branch points are vertices and all other vertices are at integer distance, the vertex set must be preserved by the induced isometries. In the case where $T$ is a single line, there must be a vertex stabilised by an orientation reversing element. The induced isometries must send this to another such point, and in a $\Z$-tree these are all vertices. Again, all other vertices are at integer distance, and so the vertex set is preserved. So the action of $\Aut_T(G)$ is still by an action on the $\Z$-tree, as we needed.
\qedhere

\end{proof}

We use this corollary to prove the first part of Theorem \ref{thm:has_not_FA}. First, we prove special cases where there are \emph{only} two or three free factors (satisfying the conditions of the theorem) and then use the discussion of characteristic subgroups to extend these results. In the two factor case we construct an action of the full automorphism group on a Bass-Serre tree for the group; in the three factor case it is an action of the outer automorphism group.

\begin{prop}
\label{prop:two_factors}
If $G = H * K$ then $Aut(G)$ does not have Property FA. (Here one or both factor groups may be $\Z$ without affecting the result.)
\end{prop}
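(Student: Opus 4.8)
The plan is to exhibit an explicit action of $\Aut(G)$ on a tree with no global fixed point, namely an extension of the action of $G$ on a Bass-Serre tree for the splitting $G = H*K$. First I would take $T$ to be the Bass-Serre tree of the one-edge splitting $H*K$: it has two orbits of vertices (with stabilisers conjugates of $H$ and $K$) and one orbit of edges with trivial stabiliser. This action of $G$ on $T$ is minimal, and it is not a shift nor does it preserve the orientation of a line (since $H,K$ are nontrivial, there is plenty of branching, or at worst when $H=K=\Z$ we get the standard action of $\Z*\Z = F_2$ on its Cayley tree, which is irreducible). So Corollary \ref{cor:trans_length_pres} applies to the subgroup $\Aut_T(G)$.

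The key point is then to show $\Aut_T(G) = \Aut(G)$, i.e.\ that \emph{every} automorphism of $G = H*K$ preserves the translation length function of this action. By Proposition \ref{prop:translation_length_is_cyclic_reduced_length}, $\Vert g\Vert$ is the cyclically reduced length of $g$ as an element of the free product, which is twice the number of syllables from the other factor in a cyclically reduced form — equivalently, $\Vert g \Vert = 0$ if $g$ is conjugate into $H$ or into $K$, and $\Vert g \Vert > 0$ otherwise. Since $H$ and $K$ are the Grushko factors of $G$ (here both freely indecomposable, or infinite cyclic), any automorphism $\varphi$ of $G$ permutes the conjugacy classes of free factors: $\varphi$ sends $H$ to a conjugate of $H$ or of $K$, and likewise for $K$. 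Hence $\varphi$ preserves the set of elements with $\Vert\cdot\Vert = 0$, i.e.\ the elliptic elements. To upgrade this to preserving $\Vert\cdot\Vert$ exactly, I would invoke the uniqueness in Theorem \ref{thm:Culler_Morgan} together with the observation that the twisted action $*_\varphi$ is again (up to the $G$-equivariance provided by $\varphi$) an action on the Bass-Serre tree of a one-edge free-product splitting of $G$ into two factors, which by the uniqueness of the Grushko decomposition must again be $H*K$; so the two length functions agree up to scale, and since both trees are simplicial with edge length $1$ and both splittings have the same edge set structure, the scale is $1$. Therefore $\Aut_T(G) = \Aut(G)$.

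Now Corollary \ref{cor:trans_length_pres} gives an action of $\Aut(G)$ on $T$ by isometries, and part (1) of that corollary says the inner automorphism $\delta(g)$ acts as $g$ does. Since the original $G$-action on $T$ has no global fixed point (the two vertex groups $H$ and $K$ are not conjugate, so their fixed point sets are disjoint), the subgroup $\mathrm{Inn}(G) \cong G$ already moves points around, so the $\Aut(G)$-action has no global fixed point. Hence $\Aut(G)$ does not have Property (FA), as required.

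The main obstacle I anticipate is the step identifying $\Aut_T(G)$ with all of $\Aut(G)$ — specifically pinning down the \emph{scaling constant} between the two translation length functions. Showing that automorphisms preserve ellipticity is routine from Grushko uniqueness, but Theorem \ref{thm:Culler_Morgan} only promises equality of length functions when they are literally equal, not just proportional; one must argue that the cyclically-reduced-length function of the free product $H*K$ is intrinsic enough that precomposing with an automorphism returns exactly the same function, not a rescaling. The cleanest route is probably to note that $\Vert\cdot\Vert$ takes the value $2$ on some element (e.g.\ $hk$ with $h\in H$, $k\in K$ nontrivial) and that $2$ is the minimal positive value, and that this structure is automorphism-invariant — so any automorphism carries the min-positive-value set to itself, forcing the scale to be $1$.
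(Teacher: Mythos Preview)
Your approach is sound in spirit for the case where neither $H$ nor $K$ is infinite cyclic, and there it parallels the paper's argument: the paper also extends the $G$-action on the one-edge Bass--Serre tree via Corollary~\ref{cor:trans_length_pres}. The paper verifies $\Aut_T(G)=\Aut(G)$ by checking the generating automorphisms (factor automorphisms, the permutation if $H\cong K$, and partial conjugations) one at a time against the cyclically-reduced-length description of $\Vert\cdot\Vert$; your Grushko-uniqueness route is more conceptual, though the clause ``so the two length functions agree up to scale'' is doing real work that you do not justify --- you are implicitly using that the space of one-edge free splittings of $H*K$ is a single point, which is true here but deserves an argument rather than an appeal to Culler--Morgan (which goes the other direction).

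The genuine gap is in the infinite-cyclic cases, and it is not the scaling issue you flag. Your claim that automorphisms preserve the elliptic set rests on Grushko uniqueness, and this fails precisely when a factor is $\Z$: the infinite-cyclic free factor is \emph{not} determined up to conjugacy. Concretely, take $G=H*\langle x\rangle$ and the transvection $\varphi\colon x\mapsto hx$ with $h\in H\setminus\{1\}$. On the one-edge Bass--Serre tree $x$ is elliptic (it lies in the vertex group $\langle x\rangle$), but $\varphi(x)=hx$ is cyclically reduced of syllable length~$2$, hence hyperbolic with $\Vert hx\Vert=2$. So $\varphi\notin\Aut_T(G)$ and Corollary~\ref{cor:trans_length_pres} does not produce an action of all of $\Aut(G)$. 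The same happens for $F_2=\Z*\Z$: the Nielsen move $b\mapsto ab$ sends the elliptic $b$ to the hyperbolic $ab$. (Incidentally, the Bass--Serre tree of the one-edge splitting $\Z*\Z$ has two orbits of infinite-valence vertices and a non-free action; it is not the $4$-valent Cayley tree of $F_2$.)

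The paper handles this by \emph{changing the tree} when a $\Z$ factor is present. For $G=H*\Z$ it uses the graph of groups with a single vertex $H$ and a loop; on that Bass--Serre tree the length function is $\Vert h_1x^{n_1}\cdots h_kx^{n_k}\Vert=\sum_i|n_i|$, and the transvection $x\mapsto hx$ visibly preserves this. For $F_2$ the paper simply cites the known failure of Property~(FA) for $\Aut(F_2)$. So the obstacle is not the scaling constant: with a $\Z$ factor you have chosen the wrong tree, and the two length functions are not even proportional.
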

\begin{proof}
If neither $H$ nor $K$ are infinite cyclic, realise $G$ as the fundamental group of the graph of groups shown in Figure \ref{fig:freeprod_HK}. Consider the action of $G$ on the Bass-Serre tree for this graph of groups. We want to check that the translation length is preserved by every automorphism, which requires us first to calculate it. An elliptic element is a conjugate of an element of a factor group. None of the generating automorphisms change this, and so all automorphic images of elliptic elements are themselves elliptic.

\begin{figure}[h]
\begin{subfigure}{.49\textwidth}
\center
\begin{tikzpicture}[scale=1.5]
\draw[thick,middlearrow={.5}{>}{$e$}{below}] (0,0) -- (2,0); 
\draw[fill,color=red] (0,0) circle [radius=0.1]; \node at (0,-0.4) {$H$};
\draw[fill,color=blue] (2,0) circle [radius=0.1]; \node at (2,-0.4) {$K$};
\end{tikzpicture}
\caption{Graph of Groups for $H*K$}
\label{fig:freeprod_HK}
\end{subfigure}
\begin{subfigure}{.49\textwidth}
\center
\begin{tikzpicture}[scale=1.5]
\draw[thick,middlearrow={.55}{>}{$e$}{left,rotate=75}] (0,0) arc (0:350:4mm);
\draw[fill,color=red] (0,0) circle [radius=0.1]; \node at (0.4,0) {$H$};
\end{tikzpicture}
\caption{Graph of Groups for $H*Z$}
\label{fig:freeprod_HZ}
\end{subfigure}
\caption{Graphs of groups realising each $G$ in Proposition \ref{prop:two_factors}.}
\end{figure}
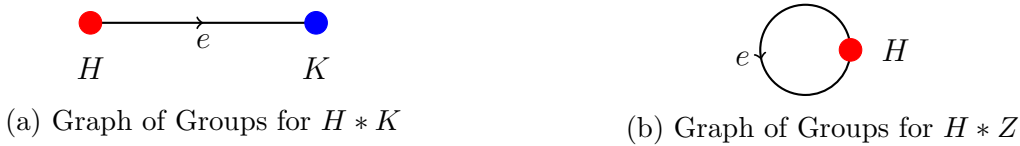
Any hyperbolic element can be cyclically reduced to a conjugate of the form $a_1b_1a_2b_2 \dots a_nb_n$, where $a_1$ and $b_n$ are non trivial. 
The path length of this cyclically reduced word is just its length: we use Proposition \ref{prop:translation_length_is_cyclic_reduced_length}, and note that since the original graph of groups had only, the path length of this conjugate doesn't change depending on which vertex was picked as the base point (and therefore which vertex group needs an edge in the path to get to it). In fact, this path length is just the length of the word. Generating automorphisms don't change the word length, after conjugating to get back to our required cyclically reduced form. This is obvious for factor automorphisms. There is a permutation automorphism ($\tau$) if and only if $H$ and $K$ are isomorphic: applying this won't change the length, although we do need to conjugate (by $a_1\tau$) to get back to our preferred form. This doesn't change the length, so translation length is unchanged. For the partial conjugation $(A,b)$ we have $a_1b_1a_2b_2 \dots a_nb_n \mapsto (b^{-1}a_1b)b_1(b^{-1}a_2b)b_2 \dots (b^{-1}a_nb)b_n \sim a_1b_1'a_2b_2' \dots a_nb_n'$ where each $b_i'=bb_ib^{-1}$. This is a cyclically reduced conjugate of the correct form and with the same length, and so the translation length is unchanged.

Since all the generators preserve the translation length function, by Corollary \ref{cor:trans_length_pres} there is an action of the automorphism group on the Bass-Serre tree that is without global fixed points. If $H$ and $K$ were isomorphic, so there was a permutation automorphism, that isometry will invert the edge in the fundamental domain, we need to pass to the barycentric subdivision; otherwise no subdivisions are necessary.

If one factor is $\Z$ (so $G= H * \Z$), we can use the same technique. A generating set for this automorphism group consists of the partial conjugations $H \mapsto H^x$ and $x \mapsto h^{-1}xh$ for all $h \in H$, the transvections $x \mapsto hx$ for all $h \in H$, and the factor automorphisms $\Aut(H)$ and $x \mapsto x^{-1}$. Realise $G$ as the fundamental group of the graph of groups in Figure \ref{fig:freeprod_HZ} and consider the action of $G$ on its Bass-Serre tree. Elliptic elements are in some conjugate of $H$, and are sent to some other conjugate of $H$ by all of the generators. Hyperbolic elements have a cyclically reduced conjugate of the form $h_1x^{n_1} \dots h_kx^{n_k}$. By Proposition \ref{prop:translation_length_is_cyclic_reduced_length} the translation length of this element is $\sum |n_i|$. Factor automorphisms of $H$ don't affect this; conjugating $H$ by $x$ is an inner automorphism so can't change the translation length. Replacing $x$ with any of its images, after conjugating by $h^{-1}$ if necessary to return to a cyclically reduced conjugate of the preferred form, has the same absolute exponent sum, and so the translation length is unchanged. So every element of the automorphism group is length preserving, and so it too acts on the Bass-Serre tree by Corollary \ref{cor:trans_length_pres}. (In this case no inversions are introduced, so we do not need to subdivide the tree.)

If both factor groups are infinite cyclic, then $\Aut(G)$ is just $\Aut(F_2)$, which does not have Property FA.
\end{proof}

For the case with three isomorphic factors, we will find an action of the outer automorphism group on a tree, similar to that given in 4.1 and 4.2 of \cite{CollinsGilbert1990AutFreeProd} for three non-isomorphic factors, and in \cite{Leder2018FA} for finite cyclic groups. 

\begin{restatable}{prop}{outerauts}
\label{prop:outerauts}
	If $G = A*B*C$ is a free product of three copies of some (freely indecomposable) group, then $\Out(G)$ has a presentation given by:
	\begin{description}
	\item[Generators:]$(A,b),(B,c),(C,a), \Aut(A)^3, \sigma_{(123)},\sigma_{(12)}$
	\item[and relations:]
	\setcounter{equation}{0} \begin{align}
	(A,b)(A,b')&=(A,b'b) \text{,  etc.} \\
	\varphi \varphi'&= \varphi'' \qquad \text{from the direct product structure} \\
	\sigma_{(123)}^3=1,\sigma_{(12)}^2&=1, (\sigma_{(123)}\sigma_{(12)})^2=1 \qquad \text{(relations for $S_3$)} \\
	\varphi^{-1} (A,b)\varphi&=(A, b\varphi) \text{,  etc.} \\
	\sigma_{(12)}^{-1} (A,b)\sigma_{(12)}&= \gamma(a^{-1})(C,a^{-1}) \\
	\sigma_{(12)}^{-1} (B,c)\sigma_{(12)}&= \gamma(c^{-1})(B,c^{-1}) \\
	\sigma_{(12)}^{-1} (C,a)\sigma_{(12)}&= \gamma(b^{-1})(A,b^{-1}) \\
	\sigma_{(123)}^{-1} (A,b) \sigma_{(123)} &= (B,c) \text{,  etc.} \\
	\sigma^{-1}\varphi\sigma&=\varphi' \qquad\text{from the wreath product structure}
	\end{align}
	\end{description}
	Here $\gamma(a)$ means the inner factor automorphism conjugating $A$ by $a$, $\varphi$ is reserved for factor automorphisms, and $\sigma$ for permutation automorphisms. The relations should be taken to range over all appropriate generators.
\end{restatable}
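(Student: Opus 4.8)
The plan is to derive this presentation by a sequence of Tietze transformations, starting from the presentation of $\Aut(G)$ furnished by Propositions \ref{prop:aut_G_pres} and \ref{prop:structure_of_factor_perm_auts} and then quotienting out the inner automorphisms of $G$. Since the three factors $A,B,C$ are mutually isomorphic, Proposition \ref{prop:structure_of_factor_perm_auts} gives $\langle\Fact(G),\Perm(G)\rangle\cong\Aut(A)\wr S_3$, generated by the three copies $\Aut(A)^3$ of factor automorphisms together with $\sigma_{(123)},\sigma_{(12)}$; and $\FR(G)$ is generated by the six families of partial conjugations $(X,y)$, with $X$ and $y$ in distinct factors, subject to Relations \eqref{rel:mult_table}, \eqref{rel:commuters} and the long commutator relation $[(A,b)(C,b),(A,c)]=1$. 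Together with the relations $\varphi^{-1}(X,y)\varphi=(X\varphi,y\varphi)$ this yields an explicit presentation of $\Aut(G)=\FR(G)\rtimes(\Aut(A)\wr S_3)$, and $\Out(G)$ is its quotient by $\mathrm{Inn}(G)$.

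The first step is to locate $\mathrm{Inn}(G)$ inside this presentation. Conjugation of $G$ by an element $a$ of the factor $A$ equals, in the notation of Proposition \ref{prop:aut_G_pres}, the product of the three commuting automorphisms $\gamma(a)$, $(B,a)$ and $(C,a)$ (the inner factor automorphism of $A$, and the partial conjugations of $B$ and of $C$ by $a$), and symmetrically for $b\in B$ (yielding $(A,b)\gamma(b)(C,b)$) and for $c\in C$ (yielding $(A,c)(B,c)\gamma(c)$). Since $A\cup B\cup C$ generates $G$, these automorphisms generate $\mathrm{Inn}(G)$; as $\mathrm{Inn}(G)$ is already normal in $\Aut(G)$, adjoining to the presentation of $\Aut(G)$ the relations $\gamma(a)(B,a)(C,a)=1$ (all $a\in A$) together with the two symmetric families produces a presentation of $\Out(G)$.

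The second step is to use these new relations to Tietze-eliminate three of the six families of partial conjugations: keep $(A,b),(B,c),(C,a)$ and remove $(B,a)=\gamma(a^{-1})(C,a^{-1})$, $(C,b)=\gamma(b^{-1})(A,b^{-1})$ and $(A,c)=\gamma(c^{-1})(B,c^{-1})$ (inverting via Relation \eqref{rel:mult_table}). After substituting these expressions into the surviving relators, I expect the list in the statement to drop out: the relators of $\Aut(A)\wr S_3$ give relations (2), (3) and (9); Relation \eqref{rel:mult_table} restricted to the three kept families is relation (1); the relations $\varphi^{-1}(X,y)\varphi=(X\varphi,y\varphi)$ with $\varphi\in\Aut(A)^3$ give relation (4) on the kept families and, on the removed families, collapse to consequences of (2) and (4); those with $\varphi=\sigma_{(123)}$ give relation (8) and, on the removed families, collapse to consequences of (8) and (9); and — the crucial point — those with $\varphi=\sigma_{(12)}$, which sends a kept family to a removed one, become precisely relations (5), (6) and (7) once the substitutions are made, while on the removed families they are again redundant.

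I expect the main obstacle to be the remaining bookkeeping: verifying that the relators not yet accounted for — Relation \eqref{rel:mult_table} on the removed families, every instance of the commutation relation \eqref{rel:commuters}, and the long commutator relation $[(A,b)(C,b),(A,c)]=1$ (which involves two removed generators) — all become consequences of the listed relations after the eliminations are substituted in. Each such check is a short computation (for instance, $(A,b)$ commutes with $\gamma(b^{-1})(A,b^{-1})=(C,b)$ because it commutes with each factor separately), but there are many cases and one must keep the right-action convention straight throughout, since a single sign slip would propagate through the substitutions. As a final sanity check I would confirm that no listed relation has been accidentally trivialised — in particular that the eliminations have not degenerated any of (5)--(7) — which amounts to checking that each Tietze move above is genuinely reversible.
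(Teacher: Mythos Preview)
Your proposal is correct and follows essentially the same route as the paper's proof: start from the $\Aut(G)$ presentation, impose the relations $\gamma(a)(B,a)(C,a)=1$ (and symmetrics) to kill $\mathrm{Inn}(G)$, Tietze-eliminate the three families $(B,a),(C,b),(A,c)$, and then check that the commutator relations \eqref{rel:commuters} and the long commutator relation become redundant while the conjugation-by-$\sigma_{(12)}$ relations rewrite to (5)--(7). The paper carries out exactly the two verifications you flag as the ``main obstacle'' (showing $[(A,c),(B,c')]$ and $[(A,b)(C,b),(A,c)]$ reduce to $1$ after substitution), so your sketch matches the actual argument step for step.
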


A proof of this presentation is given as an appendix, since it closely follows the proof in \cite{CollinsGilbert1990AutFreeProd} for three non-isomorphic groups.

This presentation gives a semidirect product decomposition of $\Out(G)$ as $(\hat{G} \rtimes \Fact(G)) \rtimes \Perm(G)$, where $\hat{G}$ is isomorphic to $G$ but generated by the $(A,b)$,$(B,c)$ and $(C,a)$ (denote these factor groups by $\hat{B}$,$\hat{C}$ and $\hat{A}$ respectively) and the actions are the actions from the original semidirect decomposition of $\Aut(G)$. However, whenever the factors are not abelian, the order of evaluation is now important, since $\Perm(G)$ does not normalise $\hat{G}$ in the presence of inner factor automorphisms.

\begin{prop}
\label{prop:three_factors}
If $G=A*B*C$ is a free product of three copies of some (freely indecomposable) group, then $\Out(G)$ (and therefore $\Aut(G)$) acts on a tree without global fixed points.
\end{prop}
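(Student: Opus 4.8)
The plan is to build an explicit graph of groups whose fundamental group is $G = A*B*C$, such that the induced action of $G$ on its Bass-Serre tree is preserved (in the sense of translation length) by enough automorphisms to descend to $\Out(G)$. The natural candidate is the ``tripod'' graph of groups: a central vertex with trivial vertex group, joined by three edges with trivial edge groups to three vertices carrying $A$, $B$, $C$ respectively; its fundamental group is visibly $A*B*C$. Rather than try to extend the $G$-action to all of $\Aut(G)$ (which will fail, since inner automorphisms already act and we want a \emph{new} action), I would work with $\Out(G)$ directly, using the semidirect product decomposition $\Out(G) = (\hat G \rtimes \Fact(G)) \rtimes \Perm(G)$ from the presentation in Proposition~\ref{prop:outerauts}. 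The subgroup $\hat G$ is itself a free product of three copies of $G$'s factors, so $\hat G$ acts on \emph{its own} Bass-Serre tree; the real content is to see that $\Fact(G)$ and $\Perm(G)$ preserve the translation length function of that action, so Corollary~\ref{cor:trans_length_pres} applies and we get an action of the whole of $\Out(G)$.

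Concretely, I would first describe the Bass-Serre tree $T$ for the tripod graph of groups and record, via Proposition~\ref{prop:translation_length_is_cyclic_reduced_length}, that the translation length of an element of $\hat G$ is its cyclically reduced length as an alternating word in the cosets of $\hat A$, $\hat B$, $\hat C$. Then I would check generator by generator that the defining automorphisms preserve this length. Factor automorphisms $\Aut(A)^3$ act within a single factor and so fix the word structure entirely, hence preserve length. The cyclic permutation $\sigma_{(123)}$ simply relabels the three factors and preserves length. The transposition $\sigma_{(12)}$ is the delicate one: by relations (5)--(7) of Proposition~\ref{prop:outerauts} it sends a partial conjugation to an inner factor automorphism times another partial conjugation, e.g. $\sigma_{(12)}^{-1}(A,b)\sigma_{(12)} = \gamma(a^{-1})(C,a^{-1})$. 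The point is that the inner factor automorphism $\gamma(a^{-1})$ is \emph{inner in $G$}, hence trivial in $\Out(G)$ and — crucially — acts on $T$ with the same isometry as conjugation by an element of a vertex group, which does not change cyclically reduced length. So one has to verify that, after discarding the inner parts, the action of $\sigma_{(12)}$ on reduced words is length-preserving; this reduces to the observation that conjugating each syllable by a fixed group element and relabelling factors leaves the alternating-syllable pattern (hence the length) intact, exactly as in the partial-conjugation computation in the proof of Proposition~\ref{prop:two_factors}.

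Once every generator of $\Out(G)$ is shown to lie in $\Aut_T(\hat G)$ — more precisely, once the $\Out(G)$-action on $\hat G$ factors through length-preserving automorphisms of the $\hat G$-action on $T$ — Corollary~\ref{cor:trans_length_pres} furnishes an action of $\hat G \rtimes \Aut_T(\hat G) \supseteq \Out(G)$ on $T$ (after barycentric subdivision to remove the edge inversions that $\sigma_{(12)}$ necessarily introduces, since it swaps two of the three edge-orbits). Part (1) of that corollary guarantees this action has no global fixed point, because the original $\hat G$-action on its Bass-Serre tree has none (it is a non-trivial free product acting on its Bass-Serre tree). Finally, since $\Out(G)$ is a quotient of $\Aut(G)$, the pullback gives an action of $\Aut(G)$ on the same tree without global fixed point, so neither group has Property (FA). The main obstacle is the bookkeeping for $\sigma_{(12)}$: one must be careful that the ``inner'' correction terms $\gamma(\cdot)$ really do act on $T$ as vertex-group conjugations (so are length-neutral) and that the composite still preserves the alternating structure of reduced words; checking minimality/semisimplicity of the action so that Theorem~\ref{thm:Culler_Morgan} applies is routine here since $G$ is a genuine free product with at least two non-trivial factors.
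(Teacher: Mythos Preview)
Your overall strategy matches the paper's: use the decomposition $\Out(G)=(\hat G\rtimes\Fact(G))\rtimes\Perm(G)$ from Proposition~\ref{prop:outerauts}, let $\hat G$ act on the Bass--Serre tree $T$ of its tripod graph of groups, and extend via Corollary~\ref{cor:trans_length_pres} by checking translation-length preservation. The factor automorphisms and $\sigma_{(123)}$ are handled just as you say.

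The gap is in your treatment of $\sigma_{(12)}$. You write that the inner factor automorphism $\gamma(a^{-1})$ appearing in $\sigma_{(12)}^{-1}(A,b)\sigma_{(12)}=\gamma(a^{-1})(C,a^{-1})$ is ``inner in $G$, hence trivial in $\Out(G)$''. This is false: $\gamma(a^{-1})$ conjugates only the factor $A$ and fixes $B$ and $C$, so it is \emph{not} an inner automorphism of $G=A*B*C$ (the genuine inner automorphism by $a^{-1}$ is $\gamma(a^{-1})(B,a^{-1})(C,a^{-1})$, which is exactly the relator killed in passing to $\Out(G)$). Consequently $\gamma(a^{-1})$ is a nontrivial element of $\Fact(G)\leq\Out(G)$, and the conjugate $\sigma_{(12)}^{-1}(A,b)\sigma_{(12)}$ lies in $\Fact(G)\cdot\hat G$, not in $\hat G$. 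The paper flags exactly this point after Proposition~\ref{prop:outerauts}: ``$\Perm(G)$ does not normalise $\hat G$ in the presence of inner factor automorphisms.'' Since $\sigma_{(12)}$ does not induce an automorphism of $\hat G$, you cannot invoke Corollary~\ref{cor:trans_length_pres} with $\hat G$ as the base group to produce an isometry for $\sigma_{(12)}$; your one-stage plan ``show every generator of $\Out(G)$ lies in $\Aut_T(\hat G)$'' cannot succeed.

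The paper repairs this by working in two stages. First it shows $\Fact(G)\leq\Aut_T(\hat G)$ (your argument for this part is fine) and extends to an action of $\hat G\rtimes\Fact(G)$ on $T$, computing that $\lVert\varphi w\rVert=\lVert w\rVert$ for $\varphi\in\Fact(G)$ and $w\in\hat G$ cyclically reduced. Now $\Perm(G)$ \emph{does} normalise $\hat G\rtimes\Fact(G)$, and the stray $\gamma(a^{-1})$ terms are absorbed harmlessly into the $\Fact(G)$ coordinate: one checks $(\varphi w)\sigma=\varphi' w'$ with $w'$ again cyclically reduced of the same length, so $\Perm(G)\leq\Aut_T(\hat G\rtimes\Fact(G))$ and a second application of Corollary~\ref{cor:trans_length_pres} finishes. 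Your proposal contains all the right ingredients for this two-stage argument; what is missing is the recognition that $\gamma(a^{-1})$ survives in $\Out(G)$ and forces the intermediate extension to $\hat G\rtimes\Fact(G)$ before $\sigma_{(12)}$ can be handled.
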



\begin{figure}[ht]
\begin{subfigure}{.49\textwidth}
\center
\begin{tikzpicture}
\draw[thick,middlearrow={.55}{>}{$e_1$}{left,rotate=-90}] (0,0) -- (90:2);
\draw[thick,middlearrow={.55}{>}{$e_2$}{above left,rotate=150}] (0,0) -- (210:2);
\draw[thick,middlearrow={.55}{>}{$e_3$}{above right,rotate=30}] (0,0) -- (330:2);
\draw[fill,color=black] (0,0) circle [radius=0.1]; \node at (0,-0.4) {};
\draw[fill,color=red] (90:2) circle [radius=0.1]; \node at (90:2.4) {$\hat{A}$};
\draw[fill,color=red] (210:2) circle [radius=0.1]; \node at (210:2.4) {$\hat{B}$};
\draw[fill,color=red] (330:2) circle [radius=0.1]; \node at (330:2.4) {$\hat{C}$};
\end{tikzpicture}
\caption{$\hat{G} \leq \Out(G)$}
\label{fig:freeprod_ABC}
\end{subfigure}
\begin{subfigure}{0.49\textwidth}
\center
\begin{tikzpicture}
\draw[thick,middlearrow={.55}{>}{$\Fact(G)$}{left,rotate=-90}] (0,0) -- (90:2);
\draw[thick,middlearrow={.55}{>}{$\Fact(G)$}{above left,rotate=150}] (0,0) -- (210:2);
\draw[thick,middlearrow={.55}{>}{$\Fact(G)$}{above right,rotate=30}] (0,0) -- (330:2);
\draw[fill,color=black] (0,0) circle [radius=0.1]; \node at (0,-0.6) {$\Fact(G)$};
\draw[fill,color=red] (90:2) circle [radius=0.1]; \node at (90:2.4) {$\hat{A} \rtimes \Fact(G)$};
\draw[fill,color=red] (210:2) circle [radius=0.1]; \node at (210:2.4) {$\hat{B} \rtimes \Fact(G)$};
\draw[fill,color=red] (330:2) circle [radius=0.1]; \node at (330:2.4) {$\hat{C} \rtimes \Fact(G)$};
\end{tikzpicture}
\caption{$\hat{G} \rtimes \Fact(G)$}
\label{fig:prod_ABC_facts}
\end{subfigure}
\par\bigskip
\begin{subfigure}{\textwidth}
\center
\begin{tikzpicture}
\draw[thick,middlearrow={.5}{>}{$\Fact(G)\rtimes S_2$}{above}] (0,0) -- (4,0); 
\draw[fill,color=black] (0,0) circle [radius=0.1]; \node at (0,-0.4) {$\Fact(G)\rtimes S_3$};
\draw[fill,color=red] (4,0) circle [radius=0.1]; \node at (4,-0.4) {$(\hat{A}\rtimes \Fact(G))\rtimes S_2$};
\end{tikzpicture}
\caption{$(\hat{G}\rtimes \Fact(G)) \rtimes S_3$}
\label{fig:prod_ABC_facts_perms}
\end{subfigure}
\caption{The graphs of groups at each stage of Proposition \ref{prop:three_factors}}
\end{figure}
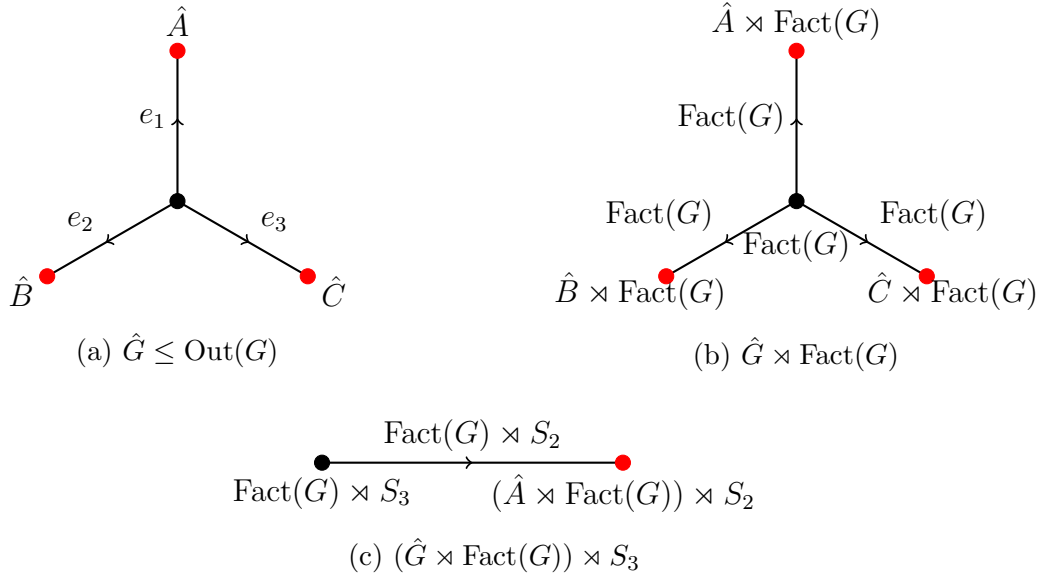


\begin{proof}
We will construct an action at each stage of the semidirect product decomposition.

Consider the tripod graph of groups for $\hat{G}$ (shown in Figure \ref{fig:freeprod_ABC}), taking the central vertex to be the base point. Call the Bass-Serre tree for this graph of groups $T$. Any elliptic word can be cyclically reduced to a single letter - a path of zero length (as expected). The translation length of any hyperbolic word is twice the length of a cyclically reduced conjugate, since every letter will require traversing two edges. The factor automorphisms act by sending $(A,b)$ to $(A,b\varphi )$ (for example), and so they don't change the (cyclically reduced) word length. So the factor automorphisms are translation length preserving, and have an action on $T$. By Corollary \ref{cor:trans_length_pres}(2), this action is compatible with the action of $\hat{G}$, and so we have an action of $\hat{G} \rtimes \Fact(G)$ on $T$.

A quotient graph of groups for this action, taking the same fundamental domain, is shown in Figure \ref{fig:prod_ABC_facts}. The factor automorphisms preserve the subgroups $\hat{A}$, $\hat{B}$, and $\hat{C}$ and so fix the fundamental domain: the equivariance of the induced isometries means fixed points are sent to fixed points, so an automorphism preserving a subgroup will induce an isometry preserving its fixed point set. Since the fixed point sets in a an action with trivial edge stabilisers are single vertices, this means that the induced isometry fixes the same vertex. The central vertex must then be fixed in order to preserve adjacency. Also, no orbits are collapsed by this action, so the fundamental domain does remain the same.

A element of $\hat{G} \rtimes \Fact(G)$ may be written uniquely as $\varphi w$, where $\varphi$ is an element of $\Fact(G)$ and $w$ is an element of $\hat{G}$. Each conjugacy class has a representative with $w$ cyclically reduced: first write $w=h^{-1}gh$, where $g$ is cyclically reduced, and the last letter of $g$ and the first letter of $h$ are drawn from different factor groups. (So there are no reductions or concatenations to do except possibly at $h^{-1}g$.) We can then conjugate the element as a whole by $h^{-1}$, giving $\varphi w \sim h \varphi h^{-1}ghh^{-1} = \varphi (h\varphi)h^{-1}g$. The word $(h\varphi)h^{-1}g$ (after cancelling and concatenating as necessary, depending on how many terminal letters of $h$ are fixed by $\varphi$) is cyclically reduced, since we chose $h$ to ensure that it (and so also $(h\varphi)$) have a first letter drawn from a different factor group to the last letter of $g$.

So it is enough to calculate the translation length of $\varphi w$ when $w$ is cyclically reduced.
Using the graph of groups in Figure \ref{fig:prod_ABC_facts}, since $\varphi$ can be picked up at the same vertex as the first non trivial group element, and $w$ is cyclically reduced, the length of the (cyclically reduced) path for $\varphi w$ is just the same as that for $w$. So $\Vert\varphi w \Vert = \Vert w \Vert$.

Now we need to describe the effect of a permutation automorphism on the translation length. We have seen that is enough to understand it in the case where $w$ is cyclically reduced, so we restrict to this case. In general, there are inner factor automorphisms introduced by the permutation, which we will need to move past the rest of the word to get back to our standard form. This can't change the length or structure (in terms of a sequence of factor groups from which the elements have come) of the word, since they either fix each letter or replace it with a different letter from the same factor group. So $(\varphi w)\sigma = \varphi' w'$, where $\varphi'$ is a (likely different) element of $\Fact(G)$, and $w'$ is the image of $w$ after applying $\sigma$ and moving any inner factor automorphisms past it. Provided $w$ was cyclically reduced, $w'$ is also cyclically reduced and has the same length. So by the argument above, $\Vert (\varphi w)\sigma \Vert = \Vert w' \Vert = \Vert w \Vert = \Vert \varphi w \Vert$, and so the translation length is preserved by the permutation automorphisms.

So the permutation automorphisms are a subgroup of $\Aut_T(\hat{G}\rtimes\Fact(G))$, and so we may further extend the action to the full outer automorphism group $(\hat{G} \rtimes \Fact(G)) \rtimes \Perm(G)$, again by applying Corollary \ref{cor:trans_length_pres}(2).
\end{proof}

A quotient graph of groups for this action (giving the splitting) is shown in Figure \ref{fig:prod_ABC_facts_perms}. The effect of the permutation automorphisms is to collapse the orbits of the three outer vertices to one, while preserving the orbit of the central vertex. So there are two orbits of vertices and one orbit of edges: the edge is stabilised by $\Fact(G) \rtimes C_2$, and the vertices by $(\hat{A} \rtimes \Fact(G))\rtimes C_2$ and by $\Fact(G) \rtimes S_3$.

We are now in a position to prove the final part of Theorem \ref{thm:has_not_FA}:

\begin{cor}
\label{cor:No_FA_for_counts}
Suppose $G$ is a free product of freely indecomposable groups, such that any of the following occur: \begin{itemize}
\item the free rank is exactly $2$;
\item the free rank is exactly $1$, and another free factor appears exactly once
\item $G$ has no infinite cyclic factors and either a free factor appears exactly two or three times, or any two free factors appear exactly once.
\end{itemize}
Then $Aut(G)$ does not have Property FA.
\end{cor}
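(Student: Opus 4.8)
The plan is to reduce Corollary~\ref{cor:No_FA_for_counts} to the special cases already established, namely Propositions~\ref{prop:two_factors} and~\ref{prop:three_factors}, using the quotient maps coming from characteristic subgroups described in Section~2.3. The key observation is that if $N$ is a characteristic subgroup of $G$, then $\Aut(G)$ surjects onto $\Aut(G/N)$ (when $N$ is the normal closure of all factors in certain isomorphism classes, the quotient is the free product of the remaining factors and the map is onto), and a quotient of a group with Property~(FA) still has Property~(FA); equivalently, if $\Aut(G/N)$ fails to have Property~(FA) then so does $\Aut(G)$. So it suffices, in each case, to collapse $G$ down to a free product of exactly two or exactly three of the relevant factors and invoke the corresponding proposition.

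Concretely, I would argue case by case. If the free rank is exactly $2$, take $N$ to be the normal closure of all the non-cyclic factors and, if need be, quotient further so that $G/N \cong F_2$; then $\Aut(F_2)$ does not have Property~(FA), so neither does $\Aut(G)$ — alternatively this is just Proposition~\ref{prop:two_factors} with both factors $\Z$. If the free rank is exactly $1$ and some non-cyclic factor $H$ appears exactly once, collapse everything except one $\Z$-factor and one copy of $H$ to obtain the quotient $H * \Z$; Proposition~\ref{prop:two_factors} (the $H*\Z$ case) then applies. If $G$ has no infinite cyclic factors and two distinct factors $H$, $K$ each appear exactly once, quotient by the normal closure of all other factors to get $G/N \cong H * K$ and apply Proposition~\ref{prop:two_factors}. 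Finally, if $G$ has no infinite cyclic factors and some factor $H$ appears exactly two or three times: if it appears three times, quotient by the normal closure of all other factors to get $H * H * H$ and apply Proposition~\ref{prop:three_factors} (which gives an action of $\Out(G/N)$, hence of $\Aut(G/N)$, hence of $\Aut(G)$, without a global fixed point); if it appears exactly twice, note that $\Aut(G)$ surjects onto $\langle\Fact(G),\Perm(G)\rangle$ which by Proposition~\ref{prop:structure_of_factor_perm_auts}(3) has $\Aut(H)\wr S_2$ as a direct factor, which by Theorem~\ref{thm:wreath_prods} has no Property~(FA) — this is exactly the argument of Proposition~\ref{prop:no_FA_for_geq_two_factors}, so nothing new is needed; in fact it is cleaner to just cite that proposition together with Proposition~\ref{prop:three_factors} for the twice-appearing subcase is unnecessary since the relevant wreath product $\Aut(H)\wr S_2 \to C_2 \wr S_2$ surjects onto a group without Property~(FA) — but to keep things uniform I would instead quotient $G$ onto $H * H$ and apply Proposition~\ref{prop:two_factors}.

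The main point to get right is the verification that the relevant quotient maps are genuinely surjective onto the automorphism group (not merely that $G/N$ is the expected free product): this is handled by the remark following the proposition in Section~2.3, that when $N$ is the normal closure of all factors in a set of isomorphism classes the induced map $\Aut(G)\to\Aut(G/N)$ is onto, because the generating partial conjugations, factor automorphisms and permutation automorphisms involving only the surviving factors map to the corresponding generators of $\Aut(G/N)$. One subtlety is the two-or-three-times subcase when $H$ appears exactly twice and $G$ has other factors: collapsing to $H*H$ requires that the isomorphism class of $H$ not coincide with $\Z$, which is guaranteed by the hypothesis that there are no infinite cyclic factors. With these quotient maps in hand each bullet of the corollary is immediate.

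I expect essentially no serious obstacle here — the work was done in establishing the base cases (Propositions~\ref{prop:two_factors}, \ref{prop:three_factors}) and the surjectivity of the characteristic-quotient maps. The only thing requiring a little care is bookkeeping: making sure that in each case the chosen characteristic subgroup exists and the quotient is exactly the two- or three-factor free product to which a base case applies, and (in the three-factor case) remembering that Property~(FA) of $\Aut(G)$ would pass to the quotient $\Out(G/N)$ via $\Aut(G/N)$, contradicting Proposition~\ref{prop:three_factors}.
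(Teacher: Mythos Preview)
Your final approach is correct and is exactly the paper's: kill the unwanted factors via a characteristic normal subgroup, surject $\Aut(G)$ onto $\Aut$ of a two- or three-factor free product, and invoke Proposition~\ref{prop:two_factors} or Proposition~\ref{prop:three_factors}.

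One correction to your detour in the ``appears exactly twice'' subcase: the claim that $\Aut(H)\wr S_2$ surjects onto $C_2\wr S_2$ and that the latter lacks Property~(FA) is wrong --- $C_2\wr S_2$ is finite (of order $8$) and so certainly has Property~(FA). More generally, Theorem~\ref{thm:wreath_prods} only tells you $\Aut(H)\wr S_2$ fails~(FA) when $\Aut(H)$ has infinite abelianisation or is a union of a properly increasing chain, which is not assumed here; so neither that theorem nor Proposition~\ref{prop:no_FA_for_geq_two_factors} applies without extra hypotheses. Since you ultimately discard this line and quotient to $H*H$ instead, the slip does not affect your argument, but you should delete the wreath-product discussion from the write-up.
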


\begin{proof}
The normal subgroup generated by all other free factors is characteristic, since they contain all representatives of their isomorphism class. So there is a homomorphism from $Aut(G)$ onto $Aut(H)$, where $H$ is the subgroup generated by the free factors described in the hypotheses. We have that $\Aut(H)$ acts on a tree by Proposition \ref{prop:two_factors} if $H$ has two free factors, or that the quotient $\Out(H)$ (and therefore the group $\Aut(H)$) does by Proposition \ref{prop:three_factors} if there are three. In either case, we have an action (without global fixed points) of a quotient of $Aut(G)$ on a tree, and so $\Aut(G)$ also acts on that tree without global fixed points. So $\Aut(G)$ does not have Property (FA).
\end{proof}

The proof of Theorem \ref{thm:has_not_FA} is just assembling the proofs in this section:
\begin{proof}[Proof of Theorem \ref{thm:has_not_FA}]\hfill
\begin{enumerate}
\item This is (3) of Corollary \ref{cor:No_FA_for_counts}.
\item This is Proposition \ref{prop:no_FA_for_one_factor}
\item This is Proposition \ref{prop:no_FA_for_geq_two_factors} \qedhere
\end{enumerate}
\end{proof}

However, all of these groups (assuming the free product is non-trivial) do have finite index subgroups that admit actions on trees:

\begin{prop}
\label{prop:no_prop_T}
Suppose $G$ is a (finite, non-trivial) free product, where each factor is freely indecomposable and not infinite cyclic. Then there is a finite index subgroup of $\Aut(G)$ that does not have Property (FA).
\end{prop}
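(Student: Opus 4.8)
The plan is to produce, inside $\Aut(G)$, a finite-index subgroup that surjects onto a group which is visibly without Property (FA) — e.g.\ one that surjects onto $\Z$ or splits as a non-trivial free product. The natural candidate comes from the partial conjugations. Recall that $\FR(G)$ is generated by the subgroups $(A,B)$ of partial conjugations, and the whole of $\Aut(G)$ is $\FR(G)\rtimes\langle\Fact(G),\Perm(G)\rangle$. The permutation automorphisms are the obstruction to $\FR(G)$ having an obvious $\Z$-quotient (they force the symmetry that Section~3 exploited), so the first move is to pass to the finite-index subgroup $\Gamma = \FR(G)\rtimes\Fact(G)$, i.e.\ kill $\Perm(G)$; since each $S_{n_i}$ and $S_r$ is finite, $\Gamma$ has finite index in $\Aut(G)$.

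Next I would exhibit a homomorphism $\Gamma\to\Z$ (or onto an infinite group with an evident tree action). Using the presentation of $\FR(G)$ in Proposition~\ref{prop:aut_G_pres}: relation~\eqref{rel:mult_table} says each $(A,-)$ is (anti-)isomorphic to the conjugating factor $B$, relation~\eqref{rel:commuters} says various pairs commute, and the third relation is a commutator relation. Abelianising $\FR(G)$, the generators $(A,b)$ only see $b$ through its image in the abelianisation $B^{\mathrm{ab}}$, and at least one factor $B$ is non-trivial (and not infinite cyclic, but still has non-trivial abelianisation unless it is perfect — here one may instead use that a non-trivial finitely generated group maps onto some $\Z/p$ or onto $\Z$, or simply onto a non-trivial finite group, which is already enough to build an increasing union or an amalgam). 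Concretely, fixing two distinct factors $A,B$, the subgroup $(A,B)\cong B$ is a free factor-like piece: the cleanest route is to observe that $\FR(G)$ itself, for a free product with no infinite cyclic factors and at least the configurations allowed, contains the infinitely-generated-in-disguise structure that lets one write it as an increasing union, or to note directly that $\FR(G)$ splits. Given the machinery already set up, the slickest argument is: by Proposition~\ref{prop:no_FA_for_geq_two_factors}/\ref{prop:no_FA_for_one_factor} applied after passing to a suitable quotient, $\Gamma$ maps onto a group without Property~(FA) — but to stay self-contained for the \emph{finite-index} statement, I would instead directly build a splitting of $\Gamma$ over the subgroup generated by all $(A,b)$ with $A$ ranging over all factors and $b$ in a fixed factor $B$, realising $\Gamma$ as an HNN extension or amalgam using one of the relations of Proposition~\ref{prop:aut_G_pres} as the stable-letter relation.

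More carefully, here is the cleanest concrete step: pick a factor $B$ and an element $b\in B$ of infinite order if one exists, so $(A,b)$ has infinite order and $\langle(A,b)\rangle\cong\Z$; then the map $\FR(G)\to\Z$ sending $(A,b)\mapsto 1$ and all other partial conjugations to $0$ is well-defined by checking it respects \eqref{rel:mult_table}--(third relation) — and it extends (or can be made $\Fact(G)$-invariant after replacing $\FR(G)$ by a further finite-index subgroup, since $\Fact(G)$ permutes only finitely many such homomorphisms) to give $\Gamma'\to\Z$ for some finite-index $\Gamma'\le\Aut(G)$. If no factor has an infinite-order element (all factors torsion), then every $(A,b)$ has finite order, but one still gets a surjection onto an infinite group: the abelianisation of $\FR(G)$ is an infinite direct sum of copies of various $B^{\mathrm{ab}}$ indexed by ordered pairs of factors (modulo the relations, which only identify finitely much), hence is infinite, and an infinite abelian group that is a direct sum of finite groups is the union of a strictly increasing chain of finite subgroups — so it fails condition~(3) of Serre's criterion, giving an action on a tree without global fixed point; pulling back along $\Gamma'\twoheadrightarrow\FR(G)^{\mathrm{ab}}$ (after arranging $\Fact(G)$-invariance by a further finite-index pass) finishes it.

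The main obstacle is making the chosen homomorphism out of $\FR(G)$ genuinely $\langle\Fact(G)\rangle$-invariant (or invariant under a finite-index subgroup thereof) so that it extends to a finite-index subgroup of the full $\Aut(G)$ rather than just of $\FR(G)$: $\Fact(G)$ acts on the generating partial conjugations by acting on the conjugating elements $b\in B$ via $\Aut(B)$, so a single homomorphism $\FR(G)\to Q$ need not be fixed, but its $\Fact(G)$-orbit is finite (the target $Q$ is built from finitely many $B^{\mathrm{ab}}$'s), so the stabiliser has finite index and the intersection of that stabiliser with $\Gamma$ is the finite-index subgroup we want. Verifying that the proposed map kills exactly the relators of Proposition~\ref{prop:aut_G_pres} is a short but necessary check: \eqref{rel:mult_table} forces the map to factor through the abelianisation of each $B$, \eqref{rel:commuters} is automatic for any abelian-valued map, and the third relation $[(A,b)(C,b),(A,c)]=1$ is a commutator so also automatic.
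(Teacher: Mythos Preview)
Your first move --- passing to the finite-index subgroup $\Gamma=\FR(G)\rtimes\Fact(G)$ by killing the (finite) permutation group --- is exactly what the paper does. After that, however, your argument has a genuine gap in precisely the case that matters most for the paper (all factors finite, as in Corollary~\ref{cor:finite_gps}).

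Your torsion branch claims that ``the abelianisation of $\FR(G)$ is an \emph{infinite} direct sum of copies of various $B^{\mathrm{ab}}$ indexed by ordered pairs of factors.'' But $G$ is a \emph{finite} free product, so there are only finitely many ordered pairs of factors; the abelianisation of $\FR(G)$ is a finite direct product of the groups $B^{\mathrm{ab}}$, and if every factor is finite this is a finite group. So there is no increasing union and no surjection to $\Z$. Your infinite-order branch has a related problem: the map ``$(A,b)\mapsto 1$, all other partial conjugations $\mapsto 0$'' is not well-defined on $\FR(G)$ (relation~\eqref{rel:mult_table} forces $(A,b^2)\mapsto 2$, yet $(A,b^2)$ is an ``other'' partial conjugation), and repairing it to a genuine homomorphism requires a surjection $B\to\Z$, i.e.\ infinite \emph{abelianisation} of some factor, not merely an infinite-order element. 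When every factor has finite abelianisation neither branch goes through.

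The paper's route avoids abelianising altogether. After passing to $\Gamma=\FR(G)\rtimes\Fact(G)$, it observes that every generator of $\Gamma$ preserves the conjugacy class of each individual free factor, so the normal closure $N$ of all but two chosen factors is $\Gamma$-invariant. This yields a surjection from $\Gamma$ onto (a finite-index subgroup of) $\Aut(G/N)$, where $G/N$ is a free product of just two groups; Proposition~\ref{prop:two_factors} then supplies a fixed-point-free tree action for that quotient. The key point you were missing is that one should quotient $G$ itself (down to two factors) rather than abelianise $\FR(G)$: the former produces a genuinely non-elliptic action regardless of whether the factors are finite.
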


\begin{proof}
The finite index subgroup we will work with is the group $\FR(G) \rtimes \Fact(G)$, with the finite quotient being $\Perm(G)$. Observe that all the generators of this group preserve the conjugacy class of each free factor, making the normal closure of any collection of free factors `characteristic for this subgroup'.  Let $N$ be the normal closure of all but two factors. There is a map to $\Aut(G/N)$, and all generators (apart from the permutation, if present) are in the image. So we have a quotient isomorphic to (a finite index subgroup of) some $\Aut(H)$, where $H$ is a free product of just two groups. (If all the free factors are isomorphic, then $\Aut(H)$ necessarily contains a permutation automorphism, which is not in the image. However, its index 2 subgroup $\FR(H) \rtimes \Fact(H)$ works just as well for the rest of the argument.) By Proposition \ref{prop:two_factors} this admits an action on a tree and so does not have Property (FA).
\end{proof}

\propertyT

\begin{proof}
Discrete groups with Property (T) are finitely generated \cite[Theorem 1.3.1]{BekkaEtAlPropertyT}, so if any factor is uncountable they certainly do not have Property (T). If all factors are countable, we may use Watatani's result \cite{Watatani1982} which gives that if $\Aut(G)$ had Property (T), then every finite index subgroup would have Property (FA). Since Proposition \ref{prop:no_prop_T} gives a finite index subgroup which acts on a tree, and therefore does not have Property (FA), $\Aut(G)$ cannot have Property (T).
\end{proof}

\bibliographystyle{plain}

\appendix
\section{A presentation of \texorpdfstring{$\Out(G)$}{Out(G)}}
This appendix contains a proof of the presentation of $\Out(G)$ given in Section 4:
\outerauts*

The proof is largely the same as that given in \cite{CollinsGilbert1990AutFreeProd} for three non-isomorphic factors, differing by taking account of the permutation automorphisms which appear when the factor groups are isomorphic.

A presentation of $\Aut(G)$ (derived from Propositions \ref{prop:aut_G_pres} and \ref{prop:structure_of_factor_perm_auts}) consists of:

Generators: $(A,b),(A,c),(B,a),(B,c),(C,a),(C,b); \Aut(A)^3; \sigma_{(123)},\sigma_{(12)}$\\
\setcounter{equation}{0}
Relations (where $\varphi$ means an arbitrary factor automorphism, and $\sigma$ a permutation automorphism; and relations should be taken to range over all appropriate generators):\begin{align}
[(A,b),(C,b')]&=1 \text{,  etc.} \\
[(A,b)(C,b),(A,c)]&=1 \text{,  etc.} \\
(A,b)(A,b')&=(A,b'b) \text{,  etc.} \\
\varphi \varphi'&= \varphi'' \qquad \text{from the direct product structure} \\
\sigma_{(123)}^3=1,\sigma_{(12)}^2&=1, (\sigma_{(123)}\sigma_{(12)})^2=1 \qquad \text{(relations for $S_3$)} \\
\varphi^{-1} (A,b)\varphi&=(A, b\varphi) \text{,  etc.} \\
\sigma^{-1} (A,b)\sigma&=(A\sigma, b\sigma) \text{,  etc.} \\
\sigma^{-1}\varphi\sigma&=\varphi' \qquad\text{from the wreath product structure}
\end{align}

This gives $\Aut(G)$ as the iterated semidirect product $F\R(G)\rtimes \Aut(A)^3 \rtimes S_3$, where $\Aut(A)^3 \rtimes S_3$ is the permutation wreath product in Proposition \ref{prop:structure_of_factor_perm_auts}, and which can be evaluated in either order.

To find a presentation of $\Out(G)$, we add relations to this presentation setting each inner automorphism equal to the identity. That is, $\gamma(a)(B,a)(C,a)=1$ (where $\gamma(a)$ is the inner factor automorphism conjugating $A$ by $a \in A$ and fixing the other factor groups).

Use the new relation to rewrite three kinds of generators ($(A,c)$, $(B,a)$ and $(C,b)$) as (eg) $(A,c)=\gamma(c^{-1})(B,c^{-1})$. Then we can eliminate both those generators and the new relations. Putting this substitution in the first kind of relation we see that they are implied by the others (and so are unnecessary): \begin{align*}
[(A,c),(B,c')] &= [\gamma(c^{-1})(B,c^{-1}),(B,c')] \\ 
&= (B,c)\gamma(c)(B,c'^{-1})\gamma(c^{-1})(B,c^{-1})(B,c') \\
&= (B,c)(B,cc'^{-1}c^{-1})(B,c^{-1})(B,c') &\text{by (6)}\\
&= (B,c'c^{-1}cc'^{-1}c^{-1}c) &\text{by (3)}\\
&= (B,1) \\
&= 1
\end{align*}

Similarly for the second kind: \begin{align*}
[(A,b)(C,b),(A,c)] &= [\gamma(b^{-1}),\gamma(c^{-1})(B,c^{-1})] \\
&= \gamma(b)(B,c)\gamma(c)\gamma(b^{-1})\gamma(c^{-1})(B,c^{-1}) \\
&= \gamma(b)(B,c)\gamma(b^{-1})(B,c^{-1}) &\text{by (4)}\\
&= (B,c)(B,c^{-1}) &\text{by (6)} \\
&=1
\end{align*}

(There are also some versions of these than only require one substitution, say with $(C,a)$ instead of $(A,c)$.

Relations (3),(4),(5), and (8) are all in terms of only generators we eliminated (in which case they have also been eliminated) or of only generators we still have, so don't need any rewriting. Relations (6) only have the effect of changing the conjugating element for another drawn from the same factor group, so again don't require any rewriting.

However, (7) requires rewriting for transpositions. Taking $\sigma_{(12)}$, to interchange $A$ and $B$, we have \begin{align*}
\sigma_{(12)} (A,b)\sigma_{(12)}&=(A\sigma_{(12)}, b\sigma_{(12)})\\
&= (B,a) \\
&= \gamma(a^{-1})(C,a^{-1})
\end{align*}
And similarly: \begin{align*}
\sigma_{(12)}^{-1} (C,a)\sigma_{(12)}&= \gamma(b^{-1})(A,b^{-1}) \\
\sigma_{(12)}^{-1} (B,c)\sigma_{(12)}&= \gamma(c^{-1})(B,c^{-1})
\end{align*}

So with generators $\sigma_{(12)}$ (interchanging $A$ and $B$) and $\sigma_{(123)}$ (cycling $A$ to $B$ to $C$ to $A$) we replace (7) above with:
\setcounter{equation}{6}
\begin{subequations}
\begin{align}
\sigma_{(12)}^{-1} (A,b)\sigma_{(12)}&= \gamma(a^{-1})(C,a^{-1}) \\
\sigma_{(12)}^{-1} (B,c)\sigma_{(12)}&= \gamma(c^{-1})(B,c^{-1}) \\
\sigma_{(12)}^{-1} (C,a)\sigma_{(12)}&= \gamma(b^{-1})(A,b^{-1}) \\
\sigma_{(123)}^{-1} (A,b) \sigma_{(123)} &= (B,c) \text{,  etc.}
\end{align}
\end{subequations}

After eliminating (1) and (2), and replacing (7) by (7a)-(7d), this gives the presentation of Proposition \ref{prop:outerauts}.
\end{document}